\documentclass[11pt]{amsart}
\usepackage{geometry,amsthm,graphicx,float,enumerate}
\usepackage{amsmath}
\usepackage[makeroom]{cancel}
\usepackage{scalefnt}
\usepackage{mathrsfs}
\usepackage{color}

\geometry{margin=1in}
\usepackage{amssymb}
\restylefloat{table}
\restylefloat{figure}

%%%%%%%%%%%% MODIFY LECTURE DATE AND AUTHOR %%%%%%%%%%%%%%%%%

%%%%%%%%%%%%%%%%%%%%%%%%%%%%%%%%%%%%%%%%%%%%%%%%%%%

 % not needed

%%%%%%%%%%% SOME MACROS BELOW %%%%%%%%%%%%%%%%%%%%%%%%%%

\swapnumbers
\newtheorem{thm}{Theorem}[section]

\newtheorem{cor}[thm]{Corollary}

\newtheorem{lemma}[thm]{Lemma}
\newtheorem{prop}[thm]{Proposition}

\theoremstyle{definition}
\newtheorem{defn}[thm]{Definition}
\theoremstyle{remark}

\newtheorem{ex}[thm]{Example}

\newcommand{\bb}[1]{\mathbb{#1}}

\newcommand{\Zn}[1]{\mathbb Z_{#1}}
\newcommand{\CPk}[1]{\mathbb C \mathbb P^{#1}  }

\def\UNFs{\Omega_{N,K}} %used to denote equal norm frames
\def\Gnk{\mu_{N,K}} %used to denote Grassmannian constant

%%%%%%%%%%% PUT YOUR MACROS HERE %%%%%%%%%%%%%%%%%%%%%%%%

\newcommand\textem[1]{{\em #1}}

%%%%%%%%%%%%%%%%%%%%%%%%%%%%%%%%%%%%%%%%%%%%%%%%%%%
%%%%%%%%%%%%%%%%%%%%%%%%%%%%%%%%%%%%%%%%%%

\begin{document}

\title[Achieving the orthoplex bound and constructing weighted complex projective 2-designs]{Achieving the orthoplex bound and constructing weighted\\ complex projective 2-designs  with Singer sets
}

\author{Bernhard G. Bodmann}

\author{John Haas}

\begin{abstract}
Equiangular tight frames are examples of Grassmannian line packings for a Hilbert space. More specifically, according to a bound by Welch, they are minimizers for the maximal magnitude occurring among the inner products of all pairs of vectors in a unit-norm frame. 
This paper is dedicated to packings in the regime in which the number of frame vectors precludes the existence of equiangular frames.
The orthoplex bound then serves as an alternative to infer a geometric structure of optimal designs.
We construct frames of unit-norm vectors in $K$-dimensional complex Hilbert spaces
that achieve the orthoplex bound. When $K-1$ is a prime power, we obtain a tight frame with $K^2+1$ vectors and
when $K$ is a prime power, with $K^2+K-1$ vectors. In addition, we show that these frames
form weighted complex projective 2-designs that are useful additions to  maximal equiangular tight frames
and maximal sets of mutually unbiased bases in quantum state tomography.
Our construction is based on Singer's family of difference sets and the related concept of relative difference sets.
\end{abstract}

\maketitle

\section{Introduction}

More than 50 years ago, the problem of determining the maximal number of equiangular lines in each finite-dimensional Hilbert space appeared in the mathematical literature \cite{MR0023530}. Over time, this has received attention from many researchers, because 
according to a bound by Welch \cite{welch:bound}, equiangular lines can provide optimal packings in real or complex projective spaces. 
These packings have diverse applications, including error correction for analog signals \cite{MR1984549,MR2021601,MR2149656,MR2277977,MR2921716}, wireless communications \cite{MR1984549,MR2028016}, phase retrieval \cite{MR2549940,MR3319532} and quantum state tomography \cite{MR2931102,MR2059685,MR2301093,MR2778089}. However, the last years have shown that despite efforts by many researchers, a systematic and feasible construction of conjectured
sets of equiangular lines remains challenging, especially in the complex case \cite{MR1813161,MR2015832,MR2460230,MR2662471,szollHosi2014all}. Moreover, the Welch bound cannot be achieved if the number of lines gets large compared to the dimension of the space.
In the case of a $K$-dimensional real or complex Hilbert space, 
if the number of lines is larger than $K(K+1)/2$ or $K^2$, respectively, then this implies that the best packing cannot be equiangular. 
The lack of a known algebraic or geometric structure in this regime makes
this optimal design problem difficult. If the number of lines is not too large, then the orthoplex
bound~\cite{MR0074013, MR1418961, henkel05} offers an alternative way to construct best packings with specific geometric characteristics. When choosing unit-norm vectors as representatives of the lines, then the orthoplex bound
states that the maximal magnitude among the inner product of pairs of vectors cannot be smaller than $1/\sqrt K$.
Intriguingly, this quantity appears in the definition of mutually unbiased bases that originated in quantum information theory
\cite{MR0115648,MR1943521,MR2363400,MR3098293}. To the best of our knowledge,
the only notable type of configurations that meet the orthoplex bound seems to be obtained with
maximal sets of mutually unbiased bases, which are known to exist in all dimensions that are prime 
powers \cite{MR1943521,MR2363400,MR3098293}. A maximal set of mutually unbiased bases in
a $K$-dimensional complex Hilbert space consists of $K+1$ orthonormal bases. If two vectors belong to different bases,
then the magnitude of their inner product is $1/\sqrt K$. By removing vectors from one basis, one can then 
achieve the orthoplex bound with a total number of vectors between $K^2+1$ and $K(K+1)$.

In addition to achieving optimal packings, another property that is desirable is that of designs.
Maximal sets of mutually unbiased bases and maximal equiangular frames have been shown to
be complex projective 2-designs \cite{1523643}. This makes them useful for quantum state tomography \cite{MR2269701}.
The defining property of a $1$-design 
is that the orthogonal projections onto the lines spanned by the vectors
sum to a multiple of the identity. These vectors are then also called a tight frame. A $2$-design provides, in addition, a resolution of the identity for matrices \cite{MR2931102}. 
Because designs with few vectors are hard to construct, the more general concept of weighted $t$-designs has been introduced and studied \cite{MR2269701}.

In this paper, we construct more examples of lines, or equivalently, unit-norm vectors that saturate the orthoplex bound. 
It is intriguing that
they are obtained by families of $K^2+1$ or $K^2+K-1$ vectors in a $K$-dimensional  complex Hilbert space, thus
in size close to maximal sets of equiangular vectors or maximal sets of mutually unbiased bases. 
In contrast to the unresolved existence of $K^2$ equiangular vectors in each $K$-dimensional complex Hilbert space \cite{MR2662471},  
this construction
requires $K-1$ or $K$ to be a prime power, which provides us with two infinite families of examples, next to the known
maximal sets of mutually unbiased bases.
We construct these examples by adjoining an orthonormal basis and equiangular or near-equiangular tight frames
consisting of flat vectors. The concept of mutual unbiasedness of bases or frames is an essential ingredient in our constructions. 
As a consequence of this geometric property, we also 
obtain weighted complex projective $2$-designs. For linear quantum state tomography, such designs have been shown to be an optimal choice 
among all measurements with a fixed number of outcomes
 \cite[Sections 5 and 6]{MR2269701}.   A byproduct of our work is the construction of mutually unbiased sets of tight frames,
 which seem to be previously unknown in the literature. Another minor consequence is that our construction provides
 an alternative way to produce optimal packings with a number of vectors ranging between $K^2+1$ and $K^2+K-2$,
 by omitting vectors from a set of size $K^2+K-1$, in close analogy with the construction of optimal packings based on subselection from
 a maximal set of mutually unbiased bases.
 
 The fundamental technique we use in our construction is closely related to difference sets \cite{MR1501951,MR1735401,MR1440858}, which 
 have been used previously to construct equiangular lines that meet the Welch bound \cite{MR2235693}.
 We augment the difference set construction and also consider relative difference sets \cite{MR2057609}
 that have appeared in the construction of mutually unbiased bases \cite{MR2460230}.

This paper is organized as follows. In Section~\ref{sec:prelim}, we fix notation and terminology.
Section~\ref{sec:orthoplex} is dedicated to the construction of vectors that achieve the orthoplex-bound
with the help of difference sets or relative difference sets. We conclude by showing that these 
constructions provide weighted complex projective $2$-designs that have relevance for quantum state tomography.

\section{Preliminaries}\label{sec:prelim}
This paper is concerned with a special type of frame for finite-dimensional complex Hilbert spaces. After choosing 
an orthonormal basis $\{e_i\}_{i=1}^K$ for a $K$-dimensional Hilbert space, $K \in \mathbb N$, we identify it
with $\mathbb C^K$. A \textem{frame} $\mathcal F = \{f_j\}_{j \in J}$ for ${\mathbb C}^K$ is an at most countable family of vectors such that there are
constants $A,B>0$ for which the frame bounds
$$
  A \|x \|^2 \le \sum_{j\in J} |\langle x, f_j \rangle |^2 \le B \|x\|^2
$$
hold for each $x \in \mathbb C^K$.

A frame $\{f_j\}_{j \in J}$ is called \textem{$A$-tight} if we can choose $A=B$ in the above bounds.
A frame $\{f_j\}_{j \in J}$ is called \textem{unit-norm} if each frame vector has norm $\|f_j\|=1$.
We call a vector $x \in \mathbb C^K$ \textem{flat} if $|\langle x, e_i\rangle | = \|x\|/\sqrt{K}$ for each $i\in \{1, 2, \dots, K\}$, and we call a frame ${\mathcal F} = \{f_j\}_{j \in J}$ \textem{flat} if each
frame vector $f_j\in \mathcal F $ has this property.

Let $\UNFs$ denote the space of unit norm frames consisting of $N$ vectors in $\mathbb C^K$.
Because $N=\sum_{j \in J} \|f_j\|^2 = \sum_{i=1}^K\sum_{j\in J} |\langle e_i, f_j \rangle|^2  \le B K$, the size of such a frame is bounded by $N \le BK$. 
For the set of unit-norm frames, we let 
$$\Gnk = \inf\limits_{{\mathcal F} \in \UNFs} \max\limits_{j \neq l} |\langle f_j, f_l \rangle | \, ,$$ 
i.e.\ \textem{the Grassmannian constant for the pair $(N,K)$}.   
We also write $\mu(\mathcal F) = \max\limits_{j \neq l} |\langle f_j, f_l \rangle |$.
This constant is bounded by Welch's inequality \cite{welch:bound},
$$
  \Gnk \ge \sqrt{\frac{N-K}{K(N-1)}} \, ,
$$
which is saturated if and only if there is a  $N/K$-tight unit-norm frame that is equiangular, meaning $|\langle f_j, f_l \rangle | = \mu_{N,K}$
for each $j\ne l$. It can be shown that such frames only exist if $N \le K^2$ \cite{delsarte1975bounds, MR0404174}, so the bound can be improved if
$N > K^2$. 

We recall from \cite{MR0074013, MR1418961, henkel05} that for $K^2+1 \leq N$ we have the \textem{orthoplex} (or \textem{Rankin}) \textem{bound}.

\begin{thm}[\cite{MR0074013, MR1418961, henkel05}]
If $K^2+1 \leq N$, then
$$
\Gnk \geq \frac{1}{\sqrt{K}} \, .
$$
\end{thm}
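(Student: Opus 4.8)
The plan is to pass from the unit vectors to their rank-one orthogonal projections and to exploit the fact that the real vector space of Hermitian $K \times K$ matrices has dimension only $K^2$. Fix an arbitrary frame $\mathcal{F} = \{f_j\}_{j=1}^N \in \UNFs$; since $\Gnk$ is the infimum of $\mu(\mathcal{F})$ over all such frames, it suffices to prove $\mu(\mathcal{F}) \ge 1/\sqrt{K}$. The case $K = 1$ is immediate, since then $\abs{\langle f_j, f_l \rangle} = 1$ for every pair, so assume $K \ge 2$. For each $j$ write $P_j = f_j f_j^*$ for the orthogonal projection onto the line spanned by $f_j$, so that $\TR(P_i P_j) = \abs{\langle f_i, f_j \rangle}^2$; in particular $\TR(P_j^2) = 1$ and $\TR(P_j) = 1$.

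Next I would subtract the trace: set $\widetilde{P}_j = P_j - \tfrac{1}{K} I_K$, where $I_K$ is the identity matrix. Each $\widetilde{P}_j$ is a nonzero traceless Hermitian matrix, the nonvanishing being where $K \ge 2$ enters (the eigenvalues of $P_j$ are $1, 0, \dots, 0$, those of $\tfrac{1}{K} I_K$ are all $1/K$). These matrices lie in the real inner product space $V$ of traceless Hermitian $K \times K$ matrices with $\langle A, B \rangle = \TR(A^* B) = \TR(AB)$, and $\dim_{\bb{R}} V = K^2 - 1$. A direct computation gives, for $i \ne j$,
$$
  \langle \widetilde{P}_i, \widetilde{P}_j \rangle = \TR(P_i P_j) - \tfrac{1}{K} = \abs{\langle f_i, f_j \rangle}^2 - \tfrac{1}{K} .
$$
Suppose now, for contradiction, that $\mu(\mathcal{F}) < 1/\sqrt{K}$. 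Then $\abs{\langle f_i, f_j \rangle}^2 < 1/K$ for all $i \ne j$, so the $N$ nonzero vectors $\widetilde{P}_1, \dots, \widetilde{P}_N$ of $V$ are pairwise negatively correlated: $\langle \widetilde{P}_i, \widetilde{P}_j \rangle < 0$ whenever $i \ne j$.

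The core of the argument is then the elementary fact that a real inner product space of dimension $d$ contains at most $d + 1$ nonzero vectors with pairwise strictly negative inner products. I would prove this by induction on $d$: given pairwise obtuse nonzero vectors $w_1, \dots, w_m$ in $\mathbb{R}^d$ with $m \ge 3$, orthogonally project $w_1, \dots, w_{m-1}$ onto the hyperplane $w_m^\perp$ (note $w_m \ne 0$); since $\langle w_i, w_j \rangle < 0$ while $\langle w_i, w_m \rangle \langle w_j, w_m \rangle > 0$, the projections stay pairwise obtuse, and a short argument shows they stay nonzero, so the inductive hypothesis in dimension $d - 1$ yields $m - 1 \le d$, i.e.\ $m \le d + 1$. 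Applying this with $d = K^2 - 1$ forces $N \le K^2$, contradicting the hypothesis $N \ge K^2 + 1$. Hence $\mu(\mathcal{F}) \ge 1/\sqrt{K}$ for every $\mathcal{F} \in \UNFs$, and taking the infimum gives $\Gnk \ge 1/\sqrt{K}$.

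I expect the only genuine work to be the inductive lemma on pairwise obtuse vectors — the low-$m$ base cases and the verification that the projected vectors do not vanish; the dictionary between $\mu(\mathcal{F})$ and the geometry of the $\widetilde{P}_j$ is routine, and it is precisely the count $\dim_{\bb{R}} V = K^2 - 1$ that makes the threshold $N = K^2 + 1$ the sharp one.
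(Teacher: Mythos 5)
Your proof is correct, and it shares the paper's central device — passing from the unit vectors $f_j$ to the traceless Hermitian matrices $P_j - \tfrac{1}{K}I_K$, which live in a real inner product space of dimension $K^2-1$ and whose pairwise inner products equal $|\langle f_i,f_j\rangle|^2 - \tfrac1K$ — but you finish differently. The paper at this point invokes Rankin's theorem on optimal packings of points on a sphere in $\mathbb{R}^d$ (the orthoplex configuration), citing it rather than proving it, and thereby also reads off what an extremal configuration looks like (pairwise orthogonal $T_j$'s), which is the geometric picture exploited in the rest of the paper. You instead argue by contradiction and reduce everything to the elementary combinatorial-geometric lemma that a real inner product space of dimension $d$ contains at most $d+1$ nonzero vectors with pairwise strictly negative inner products, which you prove by induction via projection onto $w_m^\perp$; your verification that the projections remain nonzero and pairwise obtuse is the right argument, and your separate treatment of $K=1$ (where the traceless matrices vanish) is a detail the paper glosses over. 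What your route buys is a completely self-contained and more elementary proof of exactly the inequality $\Gnk \ge 1/\sqrt K$, with the dimension count $K^2-1$ visibly producing the sharp threshold $N = K^2+1$; what it does not give (and the paper's appeal to Rankin does) is the stronger packing statement and the description of near-extremal configurations as sub-configurations of orthoplex vertices, which motivates the bound $N \le 2(K^2-1)$ for OGFs quoted later in the paper.
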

\begin{proof}
Let $\mathcal F = \{f_j\}_{j \in \Zn{N}} \in \Omega_{N,K}$.  The frame vectors of $\mathcal F$ can be  embedded into the traceless subspace of the 
real Hilbert space of $K \times K$ Hermitians  via the mapping
$$ f_j \mapsto T_j \equiv f_j \otimes f_j^* - \frac{1}{K }I_K,$$
whose range has then dimension at most $K^2-1$.  Moreover, these embedded vectors all lie on a sphere of radius $\sqrt{\frac{K-1}{K}}$, because the Hilbert-Schmidt norm gives $\|T_j\|_{H.S.}^2 = \| f_j \otimes f_j^* - \frac{1}{K }I_K\|_{H.S.}^2 = \frac{K-1}{K}$ for every $j \in \Zn{N}$. The 
embdedding is distance preserving when the vectors $f_i$ and $f_j$ are interpreted as representatives of points $[f_i]$ and $[f_j]$
in complex projective space,
equipped with the chordal distance $d_c$,  because the multiple of the identity cancels when computing  for $j \neq l$
\begin{align*}
\| T_j - T_l \|_{H.S.}^2 = \| f_j \otimes f_j^* - f_l \otimes f_l^*\|_{H.S.}^2 = d_c([f_i],[f_j])^2 \, .
\end{align*}
By identifying $f_j \otimes f_j^* - \frac{1}{K }I_K$ and $f_l \otimes f_l^* - \frac{1}{K }I_K$ with vectors in $\mathbb R^{K^2-1}$ on a sphere of radius $\sqrt{\frac{K-1}{K}}$, we obtain
\begin{align*}
\|T_j - T_l \|_{H.S.}^2  
&= \frac{2(K-1)}{K} - 2 \langle T_j , T_l \rangle_{H.S.} \, .
\end{align*}
On the other hand, expressing the chordal distance in terms of the squared inner product 
$$
 d_c([f_i],[f_j])^2= 2\left(1- tr( f_j \otimes f_j^* f_l \otimes f_l^* )\right) 
= 2\left(1- |\langle f_j, f_l \rangle|^2 \right)  
$$
gives the identity 
$$
 |\langle f_ j, f_l \rangle|^2 = \frac{1}{K} + \frac{K-1}{K} \cos \phi_{j,l},
$$
between the inner product of $f_j$ and $f_l$ and  the angle $\phi_{j,l}$ between $T_j$ and $T_l$.
As  shown in \cite{MR0074013}, the best possible possible packing of $d+1$ points on a sphere in $\mathbb R^d$ cannot be improved beyond the packing corresponding to the vertices of an $l^1$-ball, an orthoplex. The claim follows because
in that case all pairs $T_j$ and $T_l$ are orthogonal, which implies that the optimal magnitude of the inner product
between two vectors $f_j$ and $f_l$ is $|\langle f_j, f_l \rangle | = 1/\sqrt K$.
\end{proof}

\begin{defn}
Let $K^2+1 \leq N$.  If $\mathcal F \in \UNFs$ and $\max\limits_{j,l \in \Zn{N}, j \neq l} |\langle f_j, f_l\rangle |= 1/\sqrt K$, then we call $\mathcal F$ an \textem{orthoplectic Grassmannian frame} or \textem{OGF}.
\end{defn}

In analogy with the Welch bound, orthoplectic Grassmannian frames can only exist if $N \le 2(K^2-1)$, because
when the upper bound is reached,
the vertices of the orthoplex are all occupied \cite{MR0074013}.

A known type of frame that saturates the orthoplex bound is a maximal set of mutually unbiased bases. This means, the frame 
for ${\mathbb C}^K$ consists of 
vectors $\{e_i^{(k)}: 1 \le i \le K, 1 \le k \le K+1\}$ such that for a fixed index $k$, $\{e_i^{(k)}: 1 \le i \le K\}$ forms an
orthonormal basis, and the inner products between any two vectors belonging to two bases indexed by $k, k'$,
$k \ne k'$, has magnitude  $|\langle e_{i}^{(k)}, e_{i'}^{(k')} \rangle | = \frac{1}{\sqrt K}$.

\begin{ex}\label{prop:dchirps}
Let $K$ be a prime number, and $\omega$ a primitive $K$-th root of unity. Denote
the canonical basis of $\bb C^K$ by $\{e^{(1)}_i\}_{i=1}^K$,  
then for $k\in \{2, 3, \dots K+1\}$,
$$
    e^{(k)}_i = \frac{1}{\sqrt K} \sum_{l=1}^K \omega^{-(k-1)l^2+il} e^{(1)}_l    \, 
$$ 
defines 
together with the canonical basis a family of $K+1$ mutually unbiased bases
called the discrete chirps. 
\end{ex}

We also define a slightly more general notion of unbiasedness between two unit-norm tight frames.

\begin{defn}
Two unit-norm tight frames $\mathcal F=\{f_j\}_{j=1}^N$ and $\mathcal F'=\{f'_l\}_{l=1}^M$ for $\mathbb C^K$ are called mutually
unbiased if for each pair of indices $j \in \{1, 2, \dots, N\}$ and $l \in \{1, 2, \dots, M\}$, the inner product of the corresponding frame vectors from $\mathcal F$ and $\mathcal F'$ has magnitude
$|\langle f_j, f'_l \rangle | = 1/\sqrt K$. 
\end{defn}

This paper is dedicated to the construction of other types of OGFs.
By examining the behavior of the gradient descent for a frame potential that is a smooth perturbation of  $\mu(\mathcal F)$ \cite{bgb15}, we discovered
the following OGF in $\Omega_{5,2}$. 

\begin{ex}\label{ex_5_2}
 Let $\{e_i\}_{i=1}^2$ be the canonical basis in ${\mathbb C}^2$, $\omega=e^{2\pi i/3}$ 
 and let $h_j = a (e_1 + \omega^j e_2)$ with $a = \frac{1}{\sqrt 2}$, then
 $\mathcal F = \{e_1, e_2, h_1, h_2, h_3\}$ is an OGF because the inner product between
 the two orthonormal basis vectors is zero, the inner products between $h_j$ and $h_l$
 with $j \ne l$ have magnitude $|\langle h_j, h_l \rangle | = \frac1 2 $ and the inner product between
 any $e_i$ and $h_j$ has magnitude $|\langle e_i, h_j \rangle | = \frac{1}{\sqrt 2} $.
\end{ex}

The main ingredient for this construction is a  set of unit-norm flat vectors with pairwise inner products of sufficiently small magnitudes.  If such a set is of an appropriate cardinality, then its union with an orthonormal basis yields an OGF.  We state this formally in the following theorem.

\begin{thm}\label{thmain}
Let $N = M +K$, where $M,K \in \mathbb N$ with $K^2+1 \leq N$ and suppose that ${\mathcal F}' = \{h_j\}_{j \in \Zn{M}} \subset \mathbb C^K$ is a set of flat unit vectors.  If 
$\max\limits_{j,l \in \Zn{M}, j \neq l} | \langle h_j, h_l \rangle | \leq \frac{1}{\sqrt{K}}$, then the set $\mathcal F = \{e_j\}_{j \in \Zn{K}}\cup {\mathcal F}'$ forms an orthoplectic Grassmannian frame, where  $\{e_j\}_{j \in \Zn{K}}$ denotes the canonical orthonormal basis for $\mathbb C^K$.  Furthermore, if ${\mathcal F}'$ forms a tight frame for $\mathbb C^K$, then $\mathcal F$ is also tight.
\end{thm}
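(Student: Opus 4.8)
The plan is to verify directly that $\mathcal{F} = \{e_j\}_{j \in \Zn{K}} \cup \mathcal{F}'$ is a unit-norm frame and that the maximum off-diagonal inner product magnitude equals exactly $1/\sqrt{K}$, so that Definition of an OGF applies once we check $N \geq K^2+1$ (which is hypothesized). First I would observe that all vectors in $\mathcal{F}$ have unit norm: the $e_j$ do by construction, and the $h_j$ do by assumption. Next I would partition the pairs of distinct vectors in $\mathcal{F}$ into three types and bound each: (i) pairs $e_j, e_l$ with $j \neq l$, for which $|\langle e_j, e_l\rangle| = 0$; (ii) pairs $e_j, h_l$, for which flatness of $h_l$ gives $|\langle e_j, h_l\rangle| = \|h_l\|/\sqrt{K} = 1/\sqrt{K}$ exactly; (iii) pairs $h_j, h_l$ with $j \neq l$, for which the hypothesis gives $|\langle h_j, h_l\rangle| \leq 1/\sqrt{K}$. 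Therefore $\mu(\mathcal{F}) = \max_{j \neq l} |\langle f_j, f_l\rangle| = 1/\sqrt{K}$, the maximum being attained on the cross pairs of type (ii). Combined with $K^2 + 1 \leq N$, this shows $\mathcal{F}$ meets the orthoplex bound with equality, hence is an OGF, provided we also confirm it is genuinely a frame — but since $\mathcal{F}$ already contains the orthonormal basis $\{e_j\}_{j\in\Zn{K}}$, the lower frame bound $A=1$ holds trivially, and the upper bound follows from finiteness of the vector set.

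For the tightness claim, suppose in addition that $\mathcal{F}'$ is a tight frame for $\mathbb{C}^K$. Since $\mathcal{F}'$ is a unit-norm tight frame with $M$ vectors in $\mathbb{C}^K$, its frame operator is $\sum_{j \in \Zn{M}} h_j \otimes h_j^* = \frac{M}{K} I_K$ (the tightness constant of a unit-norm tight frame with $M$ vectors in dimension $K$ is forced to be $M/K$ by taking traces). The canonical basis is itself a $1$-tight frame: $\sum_{j \in \Zn{K}} e_j \otimes e_j^* = I_K$. Adding the two frame operators, the frame operator of $\mathcal{F}$ is $\left(1 + \frac{M}{K}\right) I_K = \frac{K+M}{K} I_K = \frac{N}{K} I_K$, so $\mathcal{F}$ is $\frac{N}{K}$-tight. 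This is the expected tightness constant for a unit-norm tight frame of $N$ vectors in $\mathbb{C}^K$, consistent with the earlier discussion.

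There is essentially no hard step here — the statement is a clean bookkeeping argument once one has the right case split. The only points requiring a moment's care are: confirming that the maximum in $\mu(\mathcal{F})$ is \emph{attained} (not merely bounded) so that equality in the orthoplex bound is genuine, which follows because the type-(ii) cross inner products have magnitude exactly $1/\sqrt{K}$; and recalling that a unit-norm tight frame automatically has tightness constant equal to (number of vectors)$/$(dimension), which is what makes the two frame operators combine cleanly into a scalar multiple of the identity. I would present the case analysis as a short displayed computation or an inline list of the three cases, then invoke the orthoplex bound theorem and its surrounding remarks to conclude.
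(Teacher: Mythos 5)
Your proof is correct and follows essentially the same route as the paper: the same three-way case split on pairs of vectors showing the maximum off-diagonal inner product is exactly $1/\sqrt{K}$, attained on the basis--$\mathcal{F}'$ cross pairs, and tightness obtained by combining the frame operators of the two tight subfamilies (the paper simply states that a union of tight frames is tight, while you compute the constant $N/K$ explicitly).
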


\begin{proof}
Since $\{e_j\}_{j \in \Zn{K}}$ spans $\mathbb C^K$ and ${\mathcal F}'$ consists of unit norm vectors, it follows that $\mathcal F \in \Omega_{N,K}$.  Among all pairwise inner products between the vectors of $\mathcal F$, the maximal magnitude occurs when one of the vectors is from $\{e_j\}_{j \in \Zn{K}}$ and the other is from ${\mathcal F}'$ and this value is precisely $\frac{1}{\sqrt{K}}$.  By hypothesis, $K^2+1 \leq N$, so $\mathcal F$ is an OGF for $\mathbb C^K$.  The statement on tightness follows because the union of two tight frames $\mathbb C^K$ is again a tight frame for $\mathbb C^K$.
\end{proof}

%Let $h_8$ be the unit norm harmonic cyclic frame consisting of $8$ vectors over $\mathbb C^3$ generated by the sequence $\{1,2,4\}$,
%then
%$$
%F_{11,3} = \left(
%\begin{array}{cc}
%I_3 & h_8
%\end{array}
%\right)
%$$
%achieves the orthoplex bound over $\Omega_{11,3}.$
%
%Note: I chose $\{1,2,4\}$ as my first guess because I know it "makes angles small" when generating harmonic frames consisting of $7$ vectors over $\mathbb C^3$.
%
%The hope is that we can find an infinite family of complex Grassmannians that achieve the orthoplex bound.  
In this paper, we pursue the idea of using cyclic frames to achieve this condition. 

\begin{defn}\label{cyclicdef}
Let  ${\mathbb Z}_M = \{0, 1, \dots, M-1\}$ denote the additive group of integers modulo $M \in \mathbb N$.
Let  $K \in \mathbb N$ and $\{n_1, n_2, ..., n_K\} \subset {\mathbb Z}_M$.  A frame $\mathcal F =\{ h_j \}_{j \in \Zn{M}}$, where 
$h_j = \frac{1}{\sqrt{K}}  \sum_{l=1}^K
e^{2 \pi i j n_l /M} e_l$ for all $j \in \Zn{M}$, is called a \textem{cyclic frame for $\mathbb C^K$ generated by the sequence $\{n_1, n_2 ..., n_K\}$}.  When the specific choice of $\{n_1, n_2 ..., n_K\}$ is not important, we call $\mathcal F$ a \textem{cyclic frame for $\mathbb C^K$} and, when the dimension of the underlying vector space is obvious, just a \textem{cyclic frame}.
\end{defn}

\section{Families of orthoplex-bound achieving Grassmannian frames}\label{sec:orthoplex}

Cyclic frames are by definition flat with respect to the canonical orthonormal basis, so if ${\mathcal F}'$ is a cyclic frame 
indexed by ${\mathbb Z}_M$,
then it can be augmented by adjoining the 
orthonormal basis vectors to form a frame $\mathcal F$ of $M+K$ vectors having $\mu({\mathcal F}) = \max\{\mu({\mathcal F}'), 1/\sqrt K\}$.
In order to achieve the orthoplex bound, we seek sequences $\{n_1, n_2, ..., n_{K} \} \subset  \Zn{M}$, where $K^2+1 \leq M +K$ such that
the cyclic frames they generate satisfy
$$
|\langle h_j, h_0 \rangle | \leq \frac{1}{\sqrt{K}}
$$
for all $j  \ne 0$.

%Because of the underlying structure of cyclic frames, this reduces to a question about the unit circle:
%
%\begin{question}\label{Q1}
%Given an integer $K>2$, for which integers $N' \in [K^2 -K +1, 2(K^2 -1) - K ]$ can we find sequences $\{n_1, n_2,..., n_{N'} \}$ that satisfy 
%$$
%\left|
%\sum_{l \in \Zn{N'}} e^{2 \pi i n_l j /N'}
%\right| \leq \sqrt{K} \text{ for all } j \in \{1,2,..., N'-1\}?
%$$
%\end{question}
%%\frac{1}{K}|\sum\limits_{j \in \ZnN} e^{}
In the following section, we provide two families of OGFs, both of which depend on concepts from combinatorial design theory.
In order to prepare this section, we collect some facts on cyclic frames related to the discrete Fourier transform.

%In order to do this, we use  an approach based on the Fourier transform.  

\begin{defn}\label{modopdef}
Let $\mathcal F = \{f _j\}_{j \in \Zn{M}}$ be any frame and let $\omega_{M}= e^{2 \pi i /M}$.  For $\xi \in \Zn{M}$, we define the $\xi$th modulation operator for $\mathcal F$ by
$$
X_{\xi} = \sum\limits_{j \in \Zn{M}} \omega_{M}^{\xi j } f_j \otimes f_j^*
.$$
\end{defn}

We begin by recording a simple computation that relates the inner products between frame vectors and the Hilbert-Schmidt inner products between the modulation operators.

\begin{lemma}\label{dotFTval}
If $\mathcal F = \{ f_j \}_{j \in \Zn{M}}$ is any frame consisting of $M$ vectors over $\mathbb F^K$, then 
$${M}^{2} | \langle f_a, f_b \rangle |^2 
= \sum\limits_{\xi, \eta \in \Zn{M}} \omega_{M}^{b \eta-a \xi} \langle X_{\xi}, X_{\eta} \rangle_{H.S.}$$
 for all $a, b \in \Zn{M}$.
\end{lemma}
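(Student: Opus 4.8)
The plan is to expand the double sum on the right-hand side directly, using the definition of the modulation operators and the character orthogonality relations on $\Zn{M}$. First I would rewrite the Hilbert-Schmidt inner product $\langle X_\xi, X_\eta\rangle_{H.S.} = \TR(X_\xi X_\eta^*)$. Since $X_\eta^* = \sum_{l \in \Zn{M}} \overline{\omega_M^{\eta l}}\, f_l \otimes f_l^* = \sum_{l\in\Zn{M}} \omega_M^{-\eta l}\, f_l \otimes f_l^*$ and $\TR\bigl((f_j\otimes f_j^*)(f_l\otimes f_l^*)\bigr) = |\langle f_j, f_l\rangle|^2$, this yields
\[
\langle X_\xi, X_\eta\rangle_{H.S.} = \sum_{j,l \in \Zn{M}} \omega_M^{\xi j - \eta l}\, |\langle f_j, f_l\rangle|^2 .
\]

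Next I would substitute this into the right-hand side and interchange the (finite) sums, grouping the powers of $\omega_M$:
\[
\sum_{\xi,\eta\in\Zn{M}} \omega_M^{b\eta - a\xi}\langle X_\xi,X_\eta\rangle_{H.S.}
= \sum_{j,l\in\Zn{M}} |\langle f_j,f_l\rangle|^2 \left(\sum_{\xi\in\Zn{M}} \omega_M^{\xi(j-a)}\right)\left(\sum_{\eta\in\Zn{M}} \omega_M^{\eta(b-l)}\right).
\]
Then I would apply the orthogonality relation $\sum_{\xi\in\Zn{M}} \omega_M^{\xi m} = M$ if $m \equiv 0 \pmod M$ and $0$ otherwise; since $a,b,j,l$ all range over $\Zn{M}$, the inner sums collapse to $M\delta_{j,a}$ and $M\delta_{l,b}$ respectively, leaving $M^2|\langle f_a, f_b\rangle|^2$, which is the claimed identity.

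I do not expect a genuine obstacle here — this is a bookkeeping computation. The only points requiring a little care are the placement of the complex conjugate when taking the adjoint $X_\eta^*$ (so that the exponent becomes $-\eta l$ rather than $+\eta l$, matching the sign pattern $b\eta - a\xi$ in the statement), and invoking character orthogonality on the cyclic group $\Zn{M}$ rather than on $\mathbb Z$, so that ``$m\equiv 0 \pmod M$'' really does force $j = a$ and $l = b$ within $\Zn{M}$. Everything else is a direct expansion, valid over any $\mathbb F \in \{\mathbb R, \mathbb C\}$ since only the Hilbert-Schmidt trace identity and the roots of unity are used.
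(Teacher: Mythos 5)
Your proposal is correct and matches the paper's (very terse) argument in substance: the paper cites the Fourier inversion formula on $\Zn{M}$ together with the identity $|\langle f_a,f_b\rangle|^2=\langle f_a\otimes f_a^*, f_b\otimes f_b^*\rangle_{H.S.}$, and your expansion of the right-hand side followed by character orthogonality is exactly that computation carried out explicitly, just run in the reverse direction. No gaps.
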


\begin{proof}
This follows from a straightforward application of the Fourier inversion formula and the fact that 
$$
| \langle f_a, f_b \rangle |^2  = \langle f_a \otimes f_a^*, f_b \otimes f_b^* \rangle_{H.S.}.$$
%Let $a,b \in \Zn{N'}$ and apply the Fourier inversion formula to obtain
%\begin{align*}
%{N'}^2 | \langle f_a, f_b \rangle |^2
%&= {N'}^2 \langle   f_a \otimes f_a^*, f_b \otimes f_b^* \rangle_{H.S.} \\
%&= {N'}^2 \langle   \frac{1}{N'} \sum\limits_{\xi \in \Zn{N'}} \omega_{N'}^{- a \xi} \widehat \Lambda(\xi)
%%, \frac{1}{N'} \sum\limits_{\eta \in \Zn{N'}} \omega_{N'}^{- b \eta} \widehat \Lambda(\eta)
%% \rangle_{H.S.} \\
%% &=  \sum\limits_{\xi, \eta \in \Zn{N'}} \omega_{N'}^{b \eta-a \xi} \langle M_{\xi}, M_{\eta} \rangle.
\end{proof}

Next, we compute the entries of the modulation operators of cyclic frames with respect to the canonical basis.

\begin{prop}\label{cyclicmodcomp}
If  $\mathcal F = \{ h_j \}_{j \in  \Zn{M}}$ is a cyclic frame generated by the sequence $\{n_1, n_2 ..., n_K\}$, then 
the entries of the associated modulation operators $\{X_\xi\}_{\xi=1}^{M-1}$ with respect to the canonical basis are
given by
$$(X_\xi)_{a,b} = 
\left\{ \begin{array}{cc} 
 \frac{M}{K}, & n_b - n_a = \xi  \\  
 0, &  \text{otherwise}
  \end{array} \right. ,$$ 
for every $a, b, \xi \in \Zn{M}$.
\end{prop}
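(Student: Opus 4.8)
The plan is to compute $X_\xi$ entrywise straight from Definitions~\ref{cyclicdef} and~\ref{modopdef}; nothing here is subtle beyond bookkeeping. First I would record the entries of the rank-one operator attached to a single cyclic frame vector. Writing $\omega_M = e^{2\pi i/M}$, the generator $h_j = \frac{1}{\sqrt K}\sum_{l=1}^K \omega_M^{j n_l} e_l$ has $l$th coordinate $\frac{1}{\sqrt K}\omega_M^{j n_l}$, so
\[
  (h_j \otimes h_j^*)_{a,b} = (h_j)_a \overline{(h_j)_b} = \tfrac{1}{K}\,\omega_M^{j(n_a - n_b)} .
\]
This is the only step in which the generating sequence $\{n_1,\dots,n_K\}$ enters, and it already collapses the problem to a scalar exponential sum.

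Next I would insert this into $X_\xi = \sum_{j \in \Zn{M}}\omega_M^{\xi j}\, h_j \otimes h_j^*$ and pull the $j$-sum inside the matrix entry:
\[
  (X_\xi)_{a,b} = \frac{1}{K}\sum_{j \in \Zn{M}}\omega_M^{j(\xi + n_a - n_b)} .
\]
The inner sum runs over all $M$th roots of the fixed number $\omega_M^{\xi + n_a - n_b}$; by the orthogonality relation for the characters of $\Zn{M}$ it equals $M$ when $\xi + n_a - n_b \equiv 0 \pmod M$ and $0$ otherwise. Rewriting the surviving condition as $n_b - n_a = \xi$ (read in $\Zn{M}$) gives $(X_\xi)_{a,b} = M/K$ in that case and $(X_\xi)_{a,b}=0$ in all others, which is precisely the assertion.

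I do not expect a genuine obstacle: the statement is an identity whose proof is the two-line calculation above. The only points that need a moment's care are the placement of the complex conjugate, which fixes the sign in the exponent and hence whether the non-vanishing condition reads $n_b - n_a = \xi$ rather than $n_a - n_b = \xi$, and keeping the two index sets apart, with $a,b$ ranging over the coordinates $1,\dots,K$ of the canonical basis while $j$ and $\xi$ live in $\Zn{M}$ and all congruences are taken modulo $M$. As a sanity check one may note that the same computation applied to $\xi = 0$ yields $X_0 = (M/K)\,I_K$ whenever $n_1,\dots,n_K$ are distinct, recovering the fact that such a cyclic frame is $(M/K)$-tight.
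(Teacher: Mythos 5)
Your computation is correct and is essentially identical to the paper's proof: both write out $(h_j \otimes h_j^*)_{a,b} = \frac{1}{K}\omega_M^{j(n_a-n_b)}$, sum against $\omega_M^{\xi j}$, and evaluate the resulting geometric sum of $M$th roots of unity, which vanishes unless $n_b - n_a = \xi$ and equals $M$ otherwise. No differences worth noting.
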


\begin{proof}
If $j , \xi \in \Zn{M}$, then 
%$$
%f_j \otimes f_j^*
%=
%\frac{1}{\sqrt{K}}(\omega_{M}^{j n_a}   )_{a=1}^K  \otimes \frac{1}{\sqrt{K}}{(\omega_{M}^{j n_b}   )_{b=1}^K}^* ,
%$$
%so 
the $(a,b)$-entry of $h_j \otimes h_j^*$ is 
$$
(h_j \otimes h_j^*)_{a,b} = \frac{1}{K} \omega_{M}^{j (n_a - n_b)}.
$$
so, by Definition~\ref{modopdef}, the $(a,b)$-entry of $X_\xi$ is 
$$
(X_\xi)_{a,b} 
=
\left(        \sum_{j \in \Zn{M}} \omega_{M}^{j \xi} h_j \otimes h_j^*              \right)_{a,b}  
=
 \sum_{j \in \Zn{M}} \omega_{M}^{j \xi} ( h_j \otimes h_j^*    )_{a,b}
=
 \frac{1}{K} \sum_{j \in \Zn{M}}      \omega_M^{j (\xi + n_a - n_b)}.   
$$
If $n_b - n_a \neq \xi$, the entry vanishes as a summation of consecutive powers of a root of unity;  otherwise, $\omega_M^{j (\xi + n_a - n_b)}=1$ and the claim follows.
\end{proof}

This shows that the non-zero entries of the modulation operator $X_\xi$ are indexed by
 subset $\{(a,b): n_b - n_a = \xi\}$ of  ${\mathbb Z}_K \times {\mathbb Z}_K$.
 Thus, if $\xi \ne \zeta$, then
the $(a,b)$ entries of $X_\xi$ and $X_\zeta$ cannot both be non-zero.  

This implies that the 
%Another simple computation shows  that the 
modulation operators of cyclic frames are Hilbert-Schmidt orthogonal.

\begin{cor}\label{cyclic_ortho}
If $\mathcal F = \{ h_j \}_{j \in \Zn{M}}$ is a cyclic frame, then
$$ \langle X_\xi,  X_\zeta \rangle_{H.S.} = 0$$
for all $\xi, \zeta \in \Zn{M}$ with $\xi \neq \zeta$.
\end{cor}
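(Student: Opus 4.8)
The plan is to deduce this directly from the entry formula in Proposition~\ref{cyclicmodcomp}, so that essentially all the work has already been done. First I would recall that, for a cyclic frame generated by $\{n_1, n_2, \dots, n_K\}$, the operator $X_\xi$ is the $K \times K$ matrix whose $(a,b)$-entry equals $\frac{M}{K}$ when $n_b - n_a = \xi$ and vanishes otherwise. Hence the set of index pairs at which $X_\xi$ is nonzero is $S_\xi = \{ (a,b) \in \Zn{K}\times\Zn{K} : n_b - n_a = \xi \}$.

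Next I would observe that the sets $S_\xi$ and $S_\zeta$ are disjoint whenever $\xi \neq \zeta$, since a pair $(a,b)$ belonging to both would force $\xi = n_b - n_a = \zeta$. Expanding the Hilbert--Schmidt pairing entrywise as $\langle X_\xi, X_\zeta \rangle_{H.S.} = \sum_{a,b \in \Zn{K}} (X_\xi)_{a,b}\, \overline{(X_\zeta)_{a,b}}$, each summand carries a factor $(X_\xi)_{a,b}$ or $(X_\zeta)_{a,b}$ that is zero, because $(a,b)$ cannot lie in both $S_\xi$ and $S_\zeta$. Therefore the whole sum vanishes, giving $\langle X_\xi, X_\zeta \rangle_{H.S.} = 0$.

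I do not anticipate a genuine obstacle: the corollary is a formal consequence of Proposition~\ref{cyclicmodcomp}, and the only subtlety is bookkeeping --- the matrix indices $a, b$ run over $\Zn{K}$ while the modulation parameters $\xi, \zeta$ run over $\Zn{M}$, so the disjointness being used is that no single difference $n_b - n_a$ can equal two distinct values $\xi$ and $\zeta$, rather than any statement about coincidence of matrix positions.
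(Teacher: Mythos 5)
Your proposal is correct and follows exactly the argument the paper gives: Proposition~\ref{cyclicmodcomp} shows the nonzero entries of $X_\xi$ occupy the index set $\{(a,b): n_b - n_a = \xi\}$, these sets are disjoint for distinct $\xi$, and hence the entrywise Hilbert--Schmidt pairing vanishes. Your closing remark about the indices $a,b$ ranging over the matrix entries while $\xi,\zeta$ range over $\Zn{M}$ is a fair clarification of a small imprecision in the paper's statement of the proposition, but it does not change the argument.
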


%\begin{proof}
%Let $\xi, \zeta \in \Zn{M}$ with $\xi \neq \zeta$.
%Because $\mathcal F$ is a cyclic frame, it is the 
%orbit of the vector $f_0$ % = (\frac{1}{\sqrt{N}})_{l=1}^K $
%under the action of some unitary representation of the additive group $\Zn{M}$ acting on $\mathbb C^K$, so it is straightforward to verify that 
%$|\langle f_j, f_l \rangle|^2
%=
%|\langle f_0, f_{l-j} \rangle|^2$ for all $j,l \in \Zn{M}$.
%Combining this with Lemma~\ref{dotFTval}, we obtain
%$$
%\langle X_\xi,  X_\zeta \rangle_{H.S.}
%= \sum_{j , l \in \Zn{M}} \omega_{M}^{j \xi - l \zeta} |\langle f_j, f_l \rangle|^2
%= \sum_{j , l \in \Zn{M}} \omega_{M}^{j \xi - l \zeta}|\langle f_0, f_{l-j} \rangle|^2,
%$$
%where $\omega_{M} = e^{2 \pi i/{M}}$.  Reindexing this summation with $t= l-j$ yields
%\begin{align*}
%\langle X_\xi,  X_\zeta \rangle_{H.S.}
%&= \sum_{l , t \in \Zn{M}} \omega_{M}^{(l - t)\xi - l \zeta}|\langle f_0, f_{t} \rangle|^2 \\
%&= \sum_{l  \in \Zn{M}} \omega_{M}^{l (\xi - \zeta)}       \sum_{ t \in \Zn{M}} |\langle f_0, f_{t} \rangle|^2  \omega_{M}^{-t \xi }, \\
%\end{align*}
%which vanishes because of the  summation of consecutive powers of a root of unity in the first factor of the last line. 
%\end{proof}

Combining Lemma~\ref{dotFTval} with Corollary~\ref{cyclic_ortho}, we obtain the following simplified relationship between the inner products between frame vectors and the Hilbert-Schmidt inner products between the modulation operators of cyclic frames.

\begin{cor}\label{FTvalsimp}
If $\mathcal F = \{ h_j \}_{j \in \Zn{M}}$ is a cyclic frame for $\mathbb C^K$, then 
$${M}^{2} | \langle h_a, h_0 \rangle |^2 
= \sum\limits_{\xi \in \Zn{M}} \omega_{M}^{a \xi} \| X_{\xi} \|_{H.S.}^2$$
 for all $a \in \Zn{M}$.
\end{cor}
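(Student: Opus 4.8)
The plan is to specialize the general Fourier-inversion identity from Lemma~\ref{dotFTval} to the case $a=b$ and exploit the Hilbert--Schmidt orthogonality of the modulation operators established in Corollary~\ref{cyclic_ortho}. First I would invoke Lemma~\ref{dotFTval} with $b \mapsto a$ and $a \mapsto 0$, which gives
$$
M^2 |\langle h_a, h_0 \rangle|^2 = \sum_{\xi, \eta \in \Zn{M}} \omega_M^{a\eta} \langle X_\xi, X_\eta \rangle_{H.S.},
$$
using that $\omega_M^{0 \cdot \xi} = 1$ for every $\xi$. The double sum over $\xi$ and $\eta$ then collapses: by Corollary~\ref{cyclic_ortho}, $\langle X_\xi, X_\eta \rangle_{H.S.} = 0$ whenever $\xi \neq \eta$, so only the diagonal terms $\xi = \eta$ survive, and $\langle X_\xi, X_\xi \rangle_{H.S.} = \|X_\xi\|_{H.S.}^2$. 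This immediately yields
$$
M^2 |\langle h_a, h_0 \rangle|^2 = \sum_{\xi \in \Zn{M}} \omega_M^{a\xi} \|X_\xi\|_{H.S.}^2,
$$
which is precisely the claimed identity.

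There is essentially no obstacle here — the corollary is a direct and short consequence of the two results it cites. The only point that warrants a moment's care is the bookkeeping of which variable in Lemma~\ref{dotFTval} plays which role: I want the exponent to end up as $\omega_M^{a\xi}$ with a plus sign, so I set the ``$b$'' of the lemma equal to $a$ and the ``$a$'' of the lemma equal to $0$, killing the $\omega_M^{-a\xi}$ factor and leaving $\omega_M^{b\eta} = \omega_M^{a\eta}$, then rename the surviving summation index from $\eta$ to $\xi$ after the diagonalization. One might also remark, though it is not strictly needed for the statement, that the terms with $\xi = 0$ and with $a = 0$ are consistent with the normalization $\|h_a\| = 1$: when $a = 0$ the right-hand side becomes $\sum_\xi \|X_\xi\|_{H.S.}^2$, which by Parseval equals $M \sum_{j} \|h_j \otimes h_j^*\|_{H.S.}^2 = M \cdot M = M^2$, matching $M^2 |\langle h_0, h_0 \rangle|^2 = M^2$.

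In short, the proof is: apply Lemma~\ref{dotFTval} with the substitution $(a,b) \mapsto (0,a)$, then use Corollary~\ref{cyclic_ortho} to discard all off-diagonal terms $\xi \neq \eta$ in the resulting double sum, and finally rewrite the diagonal entries $\langle X_\xi, X_\xi \rangle_{H.S.}$ as $\|X_\xi\|_{H.S.}^2$. This is the form of the identity that will be useful in the next section, where the squared Hilbert--Schmidt norms $\|X_\xi\|_{H.S.}^2$ can be read off from Proposition~\ref{cyclicmodcomp} in terms of the combinatorial structure of the generating sequence $\{n_1, \dots, n_K\}$ — specifically, $\|X_\xi\|_{H.S.}^2 = (M/K)^2 \cdot \#\{(a,b) : n_b - n_a = \xi\}$ — so that controlling $|\langle h_a, h_0 \rangle|$ reduces to controlling the difference structure of the sequence, which is exactly where difference sets and relative difference sets enter.
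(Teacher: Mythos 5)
Your proof is correct and is exactly the argument the paper intends: the corollary is stated as the combination of Lemma~\ref{dotFTval} (specialized so that one index is $0$) with the Hilbert--Schmidt orthogonality of Corollary~\ref{cyclic_ortho}, which kills the off-diagonal terms of the double sum. Your index bookkeeping $(a,b)\mapsto(0,a)$ and the Parseval sanity check are both fine.
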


In \cite{MR1943521}, the authors used modulation operators to prove that the maximal number of mutually unbiased bases that can exist in $\mathbb F^K$ is $K+1$.  In our case, we use them to show that the maximal magnitude among pairwise inner products from a cyclic frame generated by a suitable choice of $\{n_1, n_2, \dots, n_K\}$ is $\frac{1}{\sqrt{K}}$ whenever $K$ or $K-1$ is a prime power.

\subsection{A difference set construction: $N=K^2+1$, $K=q+1$, $q$ a prime power}
%I mentioned above that I abandoned my search for difference sets in order to address the more general problem presented in Question~\ref{Q1}.  However, after a brief discussion with Bernhard, I took a closer look at Singer difference sets (one of the first  families I looked at before; I must have done a bad computation).  Indeed, these yield a very straightforward infinite family of orthoplex achieving complex Grassmannians.

In this subsection, we extend Example~\ref{ex_5_2} to an infinite family of OGFs.  This relies on the notion of difference sets.

\begin{defn}\label{defdiffset}
A \textem{$(M,K, \lambda)$-difference set for $\Zn{M}$} is a subset of distinct elements $\{ n_1, n_2, ..., n_K\} \subset \Zn{M}$ such that every nonzero element $x \in \Zn{M}$ can be expressed as $x = n_j - n_l$ in exactly $\lambda$ ways, where $\lambda$ is a positive integer.
\end{defn}

It is well known that cyclic frames generated by difference sets form equiangular tight frames \cite{MR2235693, MR2277977}.  Because equiangular tight frames minimize the maximal magnitude among pairwise inner products whenever they exist, difference sets are a natural starting point for our construction of orthoplectic Grassmannian frames.

The Singer family of difference sets is parametrized by
$n \in \mathbb N$ and a prime power $q$.
%\begin{itemize}
%\item 
%$M = \frac{q^{n+2}-1}{q-1}$,
%\item 
%$K=\frac{q^{n+1}-1}{q-1}$, and
%\item 
%$\lambda =  \frac{q^{n}-1}{q-1}$,
%\end{itemize}
%where the underlying group is of order $M$, $K$ is the number of elements in the difference set, $\lambda$ is the number of repetitions for each value
%of differences, $q$ is a prime power and $n \in \mathbb N$.

\begin{thm}[\cite{MR1735401,MR1440858}] Let $q$ be a prime power, $n \in \mathbb N$, 
$M= \frac{q^{n+2}-1}{q-1}$, $K=\frac{q^{n+1}-1}{q-1}$, $\lambda =  \frac{q^{n}-1}{q-1}$
and define the map $tr_{q^{n+2}/q}: {\mathbb Z}_{q^{n+2}} \to 
{\mathbb Z}_{q}$ by $tr_{q^{n+2}/q}(x) = \sum_{i=0}^{n+1} x^{q^i}$.
Let $\alpha$ be a primitive root of the multiplicative group of ${\mathbb Z}_{q^{n+2}}$,
 then
 $\{i: 0 \le i <(q^{n+2}-1)/(q-1), tr_{q^{n+2}/q}(\alpha^i) = 0\}$ forms a $(M,K,\lambda)$-difference set.
\end{thm}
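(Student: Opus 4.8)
\emph{Proof proposal.} This is Singer's classical construction \cite{MR1735401,MR1440858}, so the plan is to give the standard finite-field argument, reading the symbols $\mathbb{Z}_{q^{n+2}}$ and $\mathbb{Z}_q$ in the statement as the finite fields $\mathbb{F}_{q^{n+2}}$ and $\mathbb{F}_q$, with $tr = tr_{q^{n+2}/q}$ the field trace: an $\mathbb{F}_q$-linear surjection whose associated bilinear form $(x,y) \mapsto tr(xy)$ on $\mathbb{F}_{q^{n+2}}$ is nondegenerate. I view $\mathbb{F}_{q^{n+2}}$ as an $(n+2)$-dimensional $\mathbb{F}_q$-vector space. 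Since $\alpha$ is primitive and $q-1 \mid q^{n+2}-1$, the element $\alpha^M$ has order $(q^{n+2}-1)/M = q-1$, hence generates $\mathbb{F}_q^*$; consequently $\{\alpha^i : 0 \le i < M\}$ is a transversal of $\mathbb{F}_q^*$ in $\mathbb{F}_{q^{n+2}}^*$ and $i \mapsto \alpha^i \mathbb{F}_q^*$ is a bijection from $\mathbb{Z}_M$ onto the cosets. Because $tr$ is $\mathbb{F}_q$-linear, the condition $tr(\alpha^i) = 0$ depends only on the coset $\alpha^i\mathbb{F}_q^*$, so $D := \{i \in \mathbb{Z}_M : tr(\alpha^i) = 0\}$ is a set of distinct residues with $(q-1)|D| = |\{x \in \mathbb{F}_{q^{n+2}}^* : tr(x) = 0\}| = q^{n+1}-1$, i.e.\ $|D| = K$.

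Next I would carry out the difference count. Fix $d \in \mathbb{Z}_M \setminus \{0\}$; the goal is to show that the number of ordered pairs $(a,b) \in D \times D$ with $a - b = d$ equals $\lambda$. Writing $a = b+d$ with the sum reduced modulo $M$, we have $\alpha^{(b+d) \bmod M} = \alpha^b \alpha^d \cdot \alpha^{-\epsilon M}$ for some $\epsilon \in \{0,1\}$ with $\alpha^{-\epsilon M} \in \mathbb{F}_q^*$, so by $\mathbb{F}_q$-linearity $tr(\alpha^{(b+d)\bmod M}) = 0$ if and only if $tr(\alpha^d\alpha^b) = 0$. Hence the number of representations of $d$ is
$$
N(d) = \bigl|\{b \in \mathbb{Z}_M : tr(\alpha^b) = 0,\ tr(\alpha^d\alpha^b) = 0\}\bigr| = \frac{1}{q-1}\bigl|\{x \in \mathbb{F}_{q^{n+2}}^* : tr(x) = 0 = tr(\alpha^d x)\}\bigr|,
$$
the last step again because both conditions are invariant under scaling by $\mathbb{F}_q^*$. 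The set on the right is the intersection of the two hyperplanes $\ker(tr)$ and $\ker\bigl(x \mapsto tr(\alpha^d x)\bigr)$ of $\mathbb{F}_{q^{n+2}}$ (the second is a genuine hyperplane since $x \mapsto \alpha^d x$ is invertible and $tr$ is onto).

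Finally I would argue these two hyperplanes are distinct: if they coincided, the two functionals would be $\mathbb{F}_q$-proportional, so $tr(\alpha^d x) = tr(cx)$ for all $x$ and some $c \in \mathbb{F}_q^*$, which by nondegeneracy of the trace form forces $\alpha^d = c \in \mathbb{F}_q^*$, i.e.\ $M \mid d$, contradicting $d \neq 0$ in $\mathbb{Z}_M$. Two distinct hyperplanes through the origin in $\mathbb{F}_{q^{n+2}}$ meet in a subspace of codimension $2$, hence in $q^n$ elements, so $N(d) = (q^n-1)/(q-1) = \lambda$, independent of $d$; this is exactly the $(M,K,\lambda)$-difference set property. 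The only real subtlety is the bookkeeping caused by $D$ living in $\mathbb{Z}_M$ while $tr$ lives on $\mathbb{F}_{q^{n+2}}$, which is dispatched by the observation $\alpha^M \in \mathbb{F}_q^*$; the substantive input, nondegeneracy of the trace form, is a standard fact that needs no new work. (Equivalently, one may phrase everything projectively: $i \mapsto [\alpha^i]$ identifies $\mathbb{Z}_M$ with the points of $PG(n+1,q)$ so that $d \mapsto d+1$ is a Singer cycle, $D$ is a hyperplane, its $M$ translates are precisely all the hyperplanes, and any two distinct hyperplanes of $PG(n+1,q)$ meet in a $PG(n-1,q)$ with $(q^n-1)/(q-1) = \lambda$ points.)
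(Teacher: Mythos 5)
Your proof is correct and complete: the reduction of the count to the intersection of the two trace hyperplanes, the nondegeneracy argument showing they are distinct for $0<d<M$, and the bookkeeping via $\alpha^{M}$ generating $\mathbb{F}_q^*$ (including the wrap-around factor $\alpha^{-\epsilon M}$) are all handled properly, as is the necessary reading of the paper's ${\mathbb Z}_{q^{n+2}}$ and ${\mathbb Z}_q$ as the fields $\mathbb{F}_{q^{n+2}}$ and $\mathbb{F}_q$. There is nothing to compare against in the paper itself: this theorem is quoted with citations to Singer and the difference-set literature and is not proved there, and your argument is precisely the standard hyperplane-counting proof (equivalently, the $PG(n+1,q)$ Singer-cycle picture) that those sources use.
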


In the case $n=1$, we have $M=q^2 + q +1$, $K=q+1$ and $\lambda=1$.
Consequently, 
$$M + K = (q^2 +q +1) + (q+1) = (q+1)^2 + 1 = K^2 +1.$$

\begin{thm}\label{thmdiffsetsgiveortho}
Let $M = K^2-K+1$.  If $S = \{n_1, n_2, ..., n_K\} \subset {\mathbb Z}_{M}$ is a $(M,K,1)$-difference set for $\Zn{M}$ 
and $\mathcal F = \{h_j\}_{j \in \Zn{M}}$ is the cyclic frame generated by $S$, then for each $j \ne l$,
$$
|\langle h_j, h_l \rangle| = \frac{\sqrt{K-1}}{K} .
$$
\end{thm}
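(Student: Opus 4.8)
The plan is to compute $|\langle h_j, h_l \rangle|$ via the formula in Corollary~\ref{FTvalsimp}, using the fact that for a cyclic frame generated by a difference set the Hilbert-Schmidt norms of the modulation operators are explicitly computable. First I would note that by cyclic symmetry of the frame (since $h_j$ is obtained from $h_0$ by applying the diagonal unitary $\mathrm{diag}(\omega_M^{jn_1},\dots,\omega_M^{jn_K})$) it suffices to show $|\langle h_a, h_0\rangle| = \sqrt{K-1}/K$ for each $a \ne 0$, which is precisely what Corollary~\ref{FTvalsimp} addresses. The key input is Proposition~\ref{cyclicmodcomp}: the entries of $X_\xi$ with respect to the canonical basis are $M/K$ at positions $(a,b)$ with $n_b - n_a = \xi$ and zero elsewhere. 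Therefore $\|X_\xi\|_{H.S.}^2 = (M/K)^2 \cdot \#\{(a,b) \in \Zn{K}\times\Zn{K} : n_b - n_a = \xi\}$. For $\xi = 0$ this count is $K$ (the diagonal), and for $\xi \ne 0$ the defining property of a $(M,K,1)$-difference set says this count is exactly $\lambda = 1$.

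Next I would substitute these values into Corollary~\ref{FTvalsimp}. We get
$$
M^2 |\langle h_a, h_0\rangle|^2 = \sum_{\xi \in \Zn{M}} \omega_M^{a\xi} \|X_\xi\|_{H.S.}^2 = \frac{M^2}{K^2}\left( K + \sum_{\xi \ne 0} \omega_M^{a\xi} \cdot 1 \right).
$$
Since $\sum_{\xi \in \Zn{M}} \omega_M^{a\xi} = 0$ for $a \ne 0$, the tail sum $\sum_{\xi \ne 0} \omega_M^{a\xi}$ equals $-1$. Hence $M^2 |\langle h_a, h_0\rangle|^2 = (M^2/K^2)(K-1)$, so $|\langle h_a, h_0\rangle|^2 = (K-1)/K^2$ and $|\langle h_a, h_0\rangle| = \sqrt{K-1}/K$. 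Finally I would invoke the cyclic symmetry observation above: $\langle h_j, h_l\rangle$ has the same magnitude as $\langle h_{j-l}, h_0\rangle$, which handles all pairs $j \ne l$. I would also remark that $M = K^2 - K + 1$ is exactly the parameter relation forced by counting: a $(M,K,1)$-difference set has $K(K-1) = M - 1$ ordered difference pairs covering the $M-1$ nonzero elements once each, consistent with the hypothesis.

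There is no serious obstacle here; the argument is essentially a bookkeeping exercise once Proposition~\ref{cyclicmodcomp} and Corollary~\ref{FTvalsimp} are in hand. The only point requiring a moment's care is the reduction to the pair $(h_a, h_0)$: one must check that the diagonal unitary relating $h_j$ to $h_0$ indeed intertwines the inner product structure so that $|\langle h_j, h_l\rangle| = |\langle h_{j-l}, h_0\rangle|$, which is immediate from $\langle h_j, h_l\rangle = \frac{1}{K}\sum_{m=1}^K \omega_M^{(j-l)n_m}$. A secondary cosmetic point is confirming the count $\#\{(a,b): n_b - n_a = \xi\} = \lambda$ uses ordered pairs in the difference-set definition as stated, which matches Definition~\ref{defdiffset} where $x = n_j - n_l$ is counted with order.
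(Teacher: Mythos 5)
Your proposal is correct and follows essentially the same route as the paper: it reduces to $|\langle h_a,h_0\rangle|$ via the cyclic structure, reads off $\|X_0\|_{H.S.}^2=M^2/K$ and $\|X_\xi\|_{H.S.}^2=M^2/K^2$ (for $\xi\ne 0$) from Proposition~\ref{cyclicmodcomp} and $\lambda=1$, and then evaluates the geometric sum in Corollary~\ref{FTvalsimp} to get $\frac1K-\frac1{K^2}=\frac{K-1}{K^2}$. The only difference is that you spell out the translation-invariance step and the pair-counting explicitly, which the paper leaves implicit.
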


\begin{proof}
By Proposition~\ref{cyclicmodcomp}, we have
$$\|(X_0) \|_{H.S.}^2 = \frac{M^2}{K}$$
and from $\lambda=1$
$$\|(X_\xi) \|_{H.S.}^2 = 
%\left\{ \begin{array}{cc} 
 \frac{M^2}{K^2} %, & n_b - n_a = m (K+1), m \in \Zn{M}  \\  
% 0, &  \text{otherwise}
 % \end{array} \right. ,
 $$ 
for every $\xi \in \Zn{M}$ with $\xi \neq 0$.  Combining this with Corollary~\ref{FTvalsimp}, we have 
$$
|\langle h_j, h_l \rangle |^2 =  |\langle h_0, h_{l-j} \rangle |^2  = 
\frac{1}{K^2}
\sum_{\xi=1}^{M-1}
%\limits_{\tiny
 %
%\begin{array}{cc} 
%\xi \in \Zn{M}, \xi \ne 0
%\xi \neq m (K+1) \\
% \text{ for } m  \in  \Zn{M} 
%  \end{array} 
 \omega_{M}^{(l-j) \xi} + \frac{1}{K} = \frac 1 K - \frac{1}{K^2} \, .
$$
for all $j, l \in \Zn{M}$ with $j \neq l$, where $\omega_{M} = e^{2 \pi i / {M} }$.
\end{proof}

\begin{cor}
Let $q$ be a prime power and let ${\mathcal F}'= \{h_j\}_{j \in {\mathbb Z}_{M}}$ be a cyclic frame  generated by 
a $(M,K,1)$-difference set with $M=q^2 + q +1$ and $K=q+1$, then 
$$
|\langle h_j, h_l \rangle | %= \sqrt{\frac{K^2 -2K+1}{K(K^2-K)}} 
= \frac{\sqrt{K-1}}{K} < \frac{1}{\sqrt{K}}
$$
for all $j , l \in \Zn{M}$ with $j \neq l$ and
if $e_i$ is a canonical basis vector, then $| \langle e_i, h_j \rangle | = \frac{1}{\sqrt K}$, so
$ \mathcal F = \{e_i\}_{i=1}^K \cup \{h_j \}_{j=0}^{M-1} $
% F_{N,K} = \left(
%\begin{array}{cc}
% I_K  & H_{N'} 
%\end{array}
%\right)
%$$
%is the synthesis matrix for 
is a tight OGF in $\Omega_{N,K}$ with $N=K^2+1$, as outlined in Theorem~\ref{thmain}.
\end{cor}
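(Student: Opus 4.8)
The plan is to assemble this corollary from pieces that are already in place: the Singer family theorem, Theorem~\ref{thmdiffsetsgiveortho}, and Theorem~\ref{thmain}. First I would invoke the Singer construction with $n=1$, which produces a genuine $(M,K,1)$-difference set in $\Zn{M}$ with $M=q^2+q+1$ and $K=q+1$. A one-line arithmetic check shows $K^2-K+1 = (q+1)^2-(q+1)+1 = q^2+q+1 = M$, so the hypothesis $M = K^2-K+1$ of Theorem~\ref{thmdiffsetsgiveortho} is satisfied, and that theorem immediately gives $|\langle h_j, h_l\rangle| = \frac{\sqrt{K-1}}{K}$ for all $j \neq l$.

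Next I would verify the strict inequality $\frac{\sqrt{K-1}}{K} < \frac{1}{\sqrt{K}}$. Squaring both sides reduces this to $\frac{K-1}{K^2} < \frac{1}{K}$, i.e.\ $K-1 < K$, which holds for every $K \in \mathbb N$. In particular ${\mathcal F}'$ satisfies $\max_{j \neq l}|\langle h_j, h_l\rangle| \leq \frac{1}{\sqrt{K}}$, the standing hypothesis of Theorem~\ref{thmain}. For the cross inner products, I would use that a cyclic frame is flat with respect to the canonical basis: by Definition~\ref{cyclicdef} every coordinate of $h_j$ has modulus $\frac{1}{\sqrt{K}}$, so $|\langle e_i, h_j\rangle| = \frac{1}{\sqrt{K}}$ for all $i$ and $j$. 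Since $N = M+K = (q^2+q+1)+(q+1) = (q+1)^2+1 = K^2+1 \geq K^2+1$, Theorem~\ref{thmain} now yields that $\mathcal F = \{e_i\}_{i=1}^K \cup {\mathcal F}'$ is an OGF in $\Omega_{N,K}$.

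For tightness I would observe that the frame operator of any cyclic frame is precisely its zeroth modulation operator $X_0 = \sum_{j \in \Zn{M}} h_j \otimes h_j^*$, and Proposition~\ref{cyclicmodcomp} gives $(X_0)_{a,b} = \frac{M}{K}$ exactly when $n_a = n_b$ and $0$ otherwise; because the elements of a difference set are distinct, $X_0 = \frac{M}{K}I_K$, so ${\mathcal F}'$ is tight. The tightness clause of Theorem~\ref{thmain} then makes $\mathcal F$ tight as well. I do not expect any genuine obstacle here, since all of the analytic content was carried out in Theorem~\ref{thmdiffsetsgiveortho}; the only points requiring care are the bookkeeping steps — matching the Singer parameters to $M = K^2-K+1$, confirming $M+K = K^2+1$ so that the orthoplex regime applies with equality, and noting that it is exactly the distinctness of the difference-set elements that forces $X_0$ to be a scalar multiple of the identity.
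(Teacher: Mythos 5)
Your proposal is correct and takes essentially the same route the paper intends: match the Singer parameters to $M=K^2-K+1$, apply Theorem~\ref{thmdiffsetsgiveortho} for the intra-frame inner products, use flatness of cyclic vectors for the cross terms, and feed everything into Theorem~\ref{thmain} with $N=M+K=K^2+1$. The only (harmless) difference is your tightness step: you derive $X_0=\frac{M}{K}I_K$ directly from Proposition~\ref{cyclicmodcomp} and distinctness of the difference-set elements, whereas the paper simply invokes the known fact that cyclic frames generated by difference sets are equiangular tight frames.
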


\begin{table}
\begin{tabular}{| l | l |  }
\hline K & Difference set \\ 
\hline 3 & \{0, 1, 3\}\\ 
\hline 4 & \{0, 1, 3, 9\}\\ 
\hline 5 & \{0, 1, 4, 14, 16\}\\
\hline 6 & \{0, 1, 3, 8, 12, 18\}\\
\hline 7 & DNE \\
\hline 8 & \{0, 1, 3, 13, 32, 36, 43, 52\}\\
\hline 9 & \{0, 1, 3, 7, 15, 31, 36, 54, 63\} \\
\hline 10 & DNE\\
\hline \end{tabular}
\caption{Examples of $(M,K,1)$-difference sets constructed by Singer~\cite{MR1501951}
with $M=q^2+q+1$, $K=q+1$ and $\lambda=1$ for lowest values of $q$.}
\end{table}

\subsection{A construction based on relative difference sets: $N=K^2+K-1$, $K=q$, $q$ a prime power}
In this subsection, we use a combinatorial concept  closely related to difference sets to construct another type of OGFs.  

\begin{defn}\label{defsemidiffset}
A subset of distinct elements $S = \{ n_1, n_2, ..., n_K\} \subset \Zn{M}$ with $M=NL$ is a  \textem{$(N,L,K, \lambda)$-relative difference set for $\Zn{M}$} if there exists some ``forbidden'' subgroup $G \subset \Zn{M}$ of size $|G|=L$ such that every element $x \in S\setminus G$ can be expressed as $x = n_j - n_l$ in exactly $\lambda$ ways, where $\lambda$ is some positive integer, and no element of $G$ occurs among the differences $n_j - n_l$, $n \ne l$.
%

%In addition, if $|D|=ab$ where $a$ and $b$ are both positive integers and $D$ contains a subset $\{d_j\}_{j \in \Zn{a}}$ so that $D$ can be expressed as
%$$
%D = \bigcup_{l \in \Zn{N'} \cap \{0,1,..., b\} } \{ al + d_j : j \in \Zn{a}   \}, 
%$$ 
%then we call $S$ a \textem{$(K, \lambda, a)$-regular partial difference set}.
In the special case that $S$ is a $(q+1, q-1, q, 1)$-relative difference set, then we say that $S$ is a \textem{picket fence sequence for $\Zn{q^2-1}$}.
\end{defn}

\begin{thm}[\cite{MR1440858}] 
If $q$ is a prime power, $N=\frac{q^{n+1}-1}{q-1}$, 
$L=q-1$, $K=q^{n}$ and $\lambda=q^{n-1}$, $tr_{q^{n+1}/q}$ maps from ${\mathbb Z}_{q^{n+1}}$
to ${\mathbb Z}_q$ by $tr_{q^{n+1}/q}(x) = \sum_{i=0}^{n} x^{q^i}$,
and $\alpha$ is a primitive element of the multiplicative group of ${\mathbb Z}_{q^{n+1}}$,
then the set $\{i \in {\mathbb Z}_{q^{n+1}}: tr_{q^{n+1}/q}(\alpha^i)=1\}$ is
a  $(N,L,K,\lambda)$-relative difference set in $\Zn{NL}$.
\end{thm}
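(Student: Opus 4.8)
The plan is to pass from the additive group $\Zn{NL}$, where $NL = q^{n+1}-1$, to the finite field $\mathbb F := \mathbb F_{q^{n+1}}$: identify $\Zn{NL}$ with the multiplicative group $\mathbb F^{\times}$ via $i \mapsto \alpha^i$, so that a difference $i-j$ in $\Zn{NL}$ corresponds to the quotient $\alpha^{i-j} = \alpha^i/\alpha^j$ in $\mathbb F^{\times}$, and $tr_{q^{n+1}/q}$ becomes the field trace $tr\colon \mathbb F \to \mathbb F_q =: \mathbb F'$. Under this dictionary the candidate difference set is the affine hyperplane $R = \{v \in \mathbb F : tr(v) = 1\}$, every element of which is nonzero, and the forbidden subgroup $G \subset \Zn{NL}$ of order $L = q-1$ corresponds to the unique order-$(q-1)$ subgroup of $\mathbb F^{\times}$, namely $\mathbb F'^{\times} = \langle \alpha^N \rangle$ with $N = (q^{n+1}-1)/(q-1)$. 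Thus it suffices to show $|R| = K = q^n$ and that for each $\delta \in \mathbb F^{\times}$ the number of pairs $(u,v) \in R \times R$ with $u = \delta v$ equals $\lambda = q^{n-1}$ when $\delta \notin \mathbb F'$ and equals $0$ when $\delta \in \mathbb F'^{\times} \setminus \{1\}$; the case $\delta = 1$ is the zero difference and is not counted.

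First I would record the engine of the argument: the nondegeneracy of the trace form. Since $\mathbb F/\mathbb F'$ is separable, $tr$ is a nonzero $\mathbb F'$-linear functional, so $\delta \mapsto (v \mapsto tr(\delta v))$ is an $\mathbb F'$-linear isomorphism of $\mathbb F$ onto its $\mathbb F'$-dual; in particular, setting $\phi_1 := tr(\cdot)$ and $\phi_\delta := tr(\delta\,\cdot)$, the functionals $\phi_1, \phi_\delta$ are $\mathbb F'$-linearly independent if and only if $1, \delta$ are, i.e.\ if and only if $\delta \notin \mathbb F'$. I would then count by fibers. If $\delta \notin \mathbb F'$, then $v \mapsto (\phi_1(v), \phi_\delta(v))$ is a surjective $\mathbb F'$-linear map $\mathbb F \to \mathbb F'^2$, so its fiber over $(1,1)$ — precisely the set of $v$ with $\delta v \in R$ and $v \in R$ — has $q^{n+1}/q^2 = q^{n-1}$ elements, none equal to $0$; this gives the required $\lambda = q^{n-1}$ representations. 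If $\delta \in \mathbb F'^{\times}$, then $\phi_\delta = \delta\,\phi_1$, so the system $\phi_1(v) = 1$, $\delta\,\phi_1(v) = 1$ is inconsistent unless $\delta = 1$; hence $\delta \in \mathbb F'^{\times} \setminus \{1\}$ yields no representation, exactly as a forbidden element must. Finally $|R| = q^{n+1}/q = q^n = K$ since $tr$ is a surjective $\mathbb F'$-linear functional and $0 \notin R$.

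This is a classical fact, so the write-up can be brief and there is no serious obstacle; the only place that needs care is the translation step. One should verify explicitly that differences in $\Zn{NL}$ become quotients in $\mathbb F^{\times}$, that $\alpha^d \in \mathbb F'^{\times}$ precisely when $d \in G = \langle \alpha^N \rangle$ (so that ``no difference lies in $G$'' matches ``$0$ representations for $\delta \in \mathbb F'^{\times} \setminus \{1\}$''), and that the parameters are globally consistent: $\lambda\,(NL - L) = q^{n-1}\cdot q(q^n-1) = q^n(q^n-1) = K(K-1)$. Specializing to $n = 1$ recovers the $(q+1, q-1, q, 1)$-relative difference set, i.e.\ the picket fence sequence of Definition~\ref{defsemidiffset}, realized concretely as $\{v \in \mathbb F_{q^2} : tr_{q^2/q}(v) = 1\}$.
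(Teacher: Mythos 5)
Your argument is correct and complete. Note, however, that the paper does not prove this statement at all: it is quoted from the literature (the citation accompanying the theorem), so there is no in-paper proof to compare against. What you give is the standard argument for these ``trace-one'' relative difference sets, and it works: passing through $i\mapsto\alpha^i$ turns differences in $\Zn{NL}$ into quotients in $\mathbb{F}_{q^{n+1}}^{\times}$, the candidate set becomes the affine hyperplane $R=\{v: \mathrm{tr}(v)=1\}$ (which misses $0$), and the representation counts follow from nondegeneracy of the trace form: for $\delta\notin\mathbb{F}_q$ the two functionals $\mathrm{tr}(\cdot)$ and $\mathrm{tr}(\delta\,\cdot)$ are independent, so the fiber over $(1,1)$ has $q^{n-1}$ points, while for $\delta\in\mathbb{F}_q^{\times}\setminus\{1\}$ the system is inconsistent, matching the forbidden subgroup $N\Zn{NL}\cong\langle\alpha^N\rangle=\mathbb{F}_q^{\times}$; the size count $|R|=q^{n}$ and the global check $\lambda(NL-L)=K(K-1)$ are right. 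Two small remarks: your reading silently corrects two slips in the paper's own phrasing, namely that $\mathbb{Z}_{q^{n+1}}$ and $\mathbb{Z}_q$ should be the fields $\mathbb{F}_{q^{n+1}}$ and $\mathbb{F}_q$ and the exponents live in $\mathbb{Z}_{q^{n+1}-1}$, and that in Definition~2.8 of the relative difference set ``every element $x\in S\setminus G$'' should be ``every element of $\Zn{M}\setminus G$'' (your proof establishes the latter, intended, property); also, when you write $G=\langle\alpha^N\rangle$ you are conflating the additive subgroup of $\Zn{NL}$ generated by $N$ with its multiplicative image, which is harmless but worth tidying in a final write-up.
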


If we choose $n=1$, then we get picket fence sequences.

\begin{ex}
The sequence $\{0,1,3\}$ is a picket fence sequence for $\Zn{8}$ because $1-0=1$, $3-1=2$, $3-0=3$, $0-3=5$, $1-3=6$, and $0-1=7$. This corresponds to $q=3$, $N=4$, $L=2$, $K=3$ and $\lambda=1$.
The condition of the preceding theorem can be verified with the primitive element $\alpha=2$ of ${\mathbb Z}_9$.
\end{ex}

Next, we show that picket fence sequences generate OGFs.

%With this established, we present the main result of this section.

\begin{thm}\label{thmpartdiffsetsgiveortho}
Let $M = K^2-1$.  If $S = \{n_1, n_2, ..., n_K\} \subset {\mathbb Z}_{M}$ is a picket fence sequence for $\Zn{M}$ and $\mathcal F = \{h_j\}_{j \in \Zn{M}}$ is the cyclic frame generated by $S$, then
$$
|\langle h_0, h_a\rangle| =
\left\{
\begin{array}{cc}
1, & a=0 \\
\frac{1}{K}, & a\neq 0 \text{ and } a \equiv 0 \mod (K-1) \\
\frac{1}{\sqrt{K}}, &  a \not\equiv 0 \mod (K-1) 
\end{array}
\right.
.$$
In particular,
$$
\max\limits_{j \ne l} |\langle h_j, h_l \rangle| = \frac{1}{\sqrt K} .
$$
\end{thm}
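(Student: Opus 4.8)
The plan is to compute the Hilbert--Schmidt norms $\|X_\xi\|_{H.S.}^2$ for all $\xi \in \Zn{M}$ using Proposition~\ref{cyclicmodcomp} and the defining properties of a picket fence sequence, and then feed these into Corollary~\ref{FTvalsimp} to evaluate $|\langle h_0, h_a\rangle|^2$ as a discrete Fourier sum. By Proposition~\ref{cyclicmodcomp}, the $(a,b)$-entry of $X_\xi$ is $M/K$ when $n_b-n_a=\xi$ and $0$ otherwise, so $\|X_\xi\|_{H.S.}^2 = (M^2/K^2)\,\#\{(a,b): n_b-n_a = \xi\}$. First I would treat the three cases: for $\xi=0$ the count is $K$ (the diagonal), giving $\|X_0\|_{H.S.}^2 = M^2/K$; for $\xi$ in the forbidden subgroup $G$ (that is, $\xi \equiv 0 \bmod (K-1)$ but $\xi \ne 0$), the relative difference set property says $\xi$ never appears as a difference, so $\|X_\xi\|_{H.S.}^2 = 0$; and for $\xi \notin G$ (that is, $\xi \not\equiv 0 \bmod (K-1)$), each such $\xi$ appears exactly $\lambda=1$ time, so $\|X_\xi\|_{H.S.}^2 = M^2/K^2$.

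Next I would substitute these values into Corollary~\ref{FTvalsimp}, which gives
$$
M^2 |\langle h_0, h_a \rangle|^2 = \sum_{\xi \in \Zn{M}} \omega_M^{a\xi} \|X_\xi\|_{H.S.}^2 = \frac{M^2}{K} + \frac{M^2}{K^2} \sum_{\xi \notin G, \xi \ne 0} \omega_M^{a\xi}.
$$
The remaining task is to evaluate $\sum_{\xi \notin G} \omega_M^{a\xi}$, where $G = (K-1)\Zn{M}$ is the unique subgroup of order $K-1$ in $\Zn{M}$ (here $M = K^2-1 = (K-1)(K+1)$, so $G$ has order $K-1$ and $M/|G| = K+1$). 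Writing $\sum_{\xi \notin G} = \sum_{\xi \in \Zn{M}} - \sum_{\xi \in G}$, the full sum $\sum_{\xi \in \Zn{M}} \omega_M^{a\xi}$ is $M$ if $a=0$ and $0$ otherwise, while the subgroup sum $\sum_{\xi \in G}\omega_M^{a\xi}$ is $|G| = K-1$ if $a \equiv 0 \bmod (K-1)$ and $0$ otherwise (this is the standard orthogonality of characters restricted to the subgroup $G$, whose annihilator is $(K+1)\Zn{M}$). Combining: for $a \not\equiv 0 \bmod (K-1)$, the bracketed sum is $-1$ (from subtracting the constant diagonal term $\xi=0$, which is not in the "$\xi \notin G$, $\xi \ne 0$" range — I must be careful that $0 \in G$, so the $\xi=0$ term is already excluded by $\xi \notin G$), giving $|\langle h_0, h_a\rangle|^2 = 1/K - 1/K^2 + \cdots$; I would organize the bookkeeping cleanly by computing $\sum_{\xi \notin G} \omega_M^{a\xi}$ directly and then noting $0 \in G$. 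This yields $\sum_{\xi \notin G}\omega_M^{a\xi}$ equal to $M - (K-1)$ when $a=0$, to $-(K-1)$ when $a \equiv 0 \bmod(K-1)$ but $a \ne 0$, and to $0$ when $a \not\equiv 0 \bmod(K-1)$. Plugging in and simplifying with $M = K^2-1$ produces $|\langle h_0, h_a\rangle|^2 = 1$, $1/K^2$, and $1/K$ respectively, which after taking square roots gives exactly the claimed three-case formula; the final "in particular" statement follows since $1/\sqrt K > 1/K$ and the maximum over $j \ne l$ of $|\langle h_j, h_l\rangle| = |\langle h_0, h_{l-j}\rangle|$ is attained on the case $a = l-j \not\equiv 0 \bmod(K-1)$.

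The main obstacle is purely bookkeeping: keeping straight which of the three index regimes $\{0\}$, $G \setminus \{0\}$, and $\Zn{M}\setminus G$ the $\xi=0$ term belongs to, and correctly identifying $G$ as the order-$(K-1)$ subgroup of $\Zn{M}$ together with its character-sum behavior. There is nothing deep here beyond the character orthogonality relations for the subgroup $G \le \Zn{M}$, but the computation must account precisely for the fact that the relative difference set definition excludes $G$ (not just $\{0\}$) from the difference multiset, which is exactly what makes the $\|X_\xi\|_{H.S.}^2$ vanish on all of $G \setminus \{0\}$ and thereby forces the inner product magnitude up to $1/\sqrt K$ on the complement rather than down to the equiangular value $\sqrt{K-1}/K$ of the previous subsection.
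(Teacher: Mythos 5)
Your overall route is the same as the paper's: compute $\|X_\xi\|_{H.S.}^2$ from Proposition~\ref{cyclicmodcomp} and the relative-difference-set structure, then apply Corollary~\ref{FTvalsimp} and evaluate the resulting character sum by splitting off the forbidden subgroup. However, there is a concrete error in the key combinatorial step: you identify the forbidden subgroup $G$ as $(K-1)\Zn{M}$, i.e.\ the residues $\xi \equiv 0 \bmod (K-1)$, and assert it has order $K-1$. In $\Zn{M}$ with $M = K^2-1 = (K-1)(K+1)$, the multiples of $K-1$ form the subgroup of order $K+1$; the subgroup of order $L=K-1$ (the forbidden one for a $(K+1,K-1,K,1)$-relative difference set) is $(K+1)\Zn{M}$, the multiples of $K+1$. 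Consequently your stated vanishing pattern ``$\|X_\xi\|_{H.S.}^2 = 0$ for $\xi \equiv 0 \bmod (K-1)$, $\xi \neq 0$'' is false: for $K=3$, $S=\{0,1,3\}\subset\Zn{8}$, the differences include $2$ and $6$, both $\equiv 0 \bmod 2$, and only $\xi = 4 = K+1$ is missed. (It also contradicts the counting: with $\lambda=1$ the number of $\xi$ realized as differences must be $K(K-1) = M-(K-1)$, not $M-(K+1)$.)

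The reason your final numbers nevertheless agree with the theorem is that in the character-sum step you quietly used the values belonging to the correct subgroup: you evaluate $\sum_{\xi\in G}\omega_M^{a\xi}$ as $K-1$ when $a\equiv 0 \bmod (K-1)$ and $0$ otherwise, which holds for $G=(K+1)\Zn{M}$ (since $\omega_M^{K+1}=e^{2\pi i/(K-1)}$), but not for your stated $G=(K-1)\Zn{M}$, whose character sum equals $K+1$ precisely when $a\equiv 0 \bmod (K+1)$ --- as your own remark that the annihilator of $(K-1)\Zn{M}$ is $(K+1)\Zn{M}$ already indicates. So the condition ``$a\equiv 0 \bmod (K-1)$'' appearing in the theorem is the dual (annihilator) condition to the forbidden subgroup, not a description of the subgroup itself. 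Once you replace $G$ by $(K+1)\Zn{M}$ --- exactly as in the paper, where the excluded indices are $\xi = m(K+1)$ --- the remainder of your computation (the values $1$, $1/K^2$, $1/K$ for the three regimes of $a$, the square roots, and the ``in particular'' conclusion) is correct and coincides with the paper's proof.
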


\begin{proof}
By Definition~\ref{defsemidiffset} and Proposition~\ref{cyclicmodcomp}, we have
$$\|(X_0) \|_{H.S.}^2 = \frac{M^2}{K}$$
and
$$\|(X_\xi) \|_{H.S.}^2 = 
\left\{ \begin{array}{cc} 
 0, & \xi = m (K+1), m \in \Zn{M}  \\  
 \frac{M^2}{K^2}, &  \text{otherwise}
  \end{array} \right. ,$$ 
for every $\xi \in \Zn{M}$ with $\xi \neq 0$.  Combining this with Corollary~\ref{FTvalsimp}, we have 
$$
 |\langle h_0, h_a \rangle |^2  = 
\frac{1}{K^2}
\sum\limits_{\tiny 
\begin{array}{cc} 
\xi \in \Zn{M}, \\ 
\xi \neq m (K+1) \\
 \text{ for } m  \in  \Zn{M} 
  \end{array} } \omega_{M}^{a \xi} + \frac{1}{K}
$$
for all $a \in \Zn{M}$ with $a \neq 0$, where $\omega_{M} = e^{2 \pi i / {M} }$.
By replacing the indices where $\xi = m(K+1)$ and using that $M = K^2 -1$, the summation in the first term can be rewritten
\begin{align*}
\sum\limits_{\tiny 
\begin{array}{cc} 
\xi \in \Zn{M}, \\ 
\xi \neq m (K+1) \\
 \text{ for } m  \in  \Zn{M} 
  \end{array} } \omega_{M}^{a \xi}
  &=
\sum\limits_{\tiny 
\xi \in \Zn{M} } \omega_{M}^{a \xi}
-
\sum\limits_{\tiny m \in \Zn{K-1}} 
\omega_{M}^{a m(K+1)} \\
%&=
%\sum\limits_{ \tiny 
%\begin{array}{cc}
%p \in J_{K-1}\\% \{0, 1,2,...,K-2\} \\
%q \in J_{K+1} %\{0, 1,2,...,K\} 
%\end{array}
%}
%e^{\frac{2 \pi i a (p (K+1) + q) }{ (K-1)(K+1)}}
%-
%\sum\limits_{\tiny m \in J_{K-1}} 
%e^{\frac{2 \pi i a m}{K-1}}
%\\
&=
- \sum\limits_{\tiny m \in \Zn{K-1}} 
e^{\frac{2 \pi i a m}{K-1}},
\\
\end{align*}
where the first term vanished because of the summation of consecutive roots of unity.
If $a \equiv 0 \mod (K-1)$, then 
$$
\sum\limits_{\tiny m \in \Zn{K-1}} 
e^{\frac{2 \pi i a m}{K-1}} = K-1,
  $$
  and
  $$
   |\langle h_0, h_a \rangle | = \frac{1}{K}.
  $$
If $a \not\equiv 0 \mod (K-1)$, then 
$$\sum\limits_{\tiny m \in \Zn{K-1}} 
e^{\frac{2 \pi i a m}{K-1}} = 0$$
due to the summation of consecutive $(K-1)$th roots of unity and, in particular,
 $$
   |\langle h_0, h_a \rangle | = \frac{1}{\sqrt{K}},
  $$
  which completes the proof.
%$$
%\sum\limits_{ \tiny \begin{array}{cc}
%p \in \{0, 1,2,...,K-2\} 
%\\
%q \in \{0, 1,2,...,K\} 
%\end{array}
%}
%e^{2 \pi i a (p (K+1) + q) / (K-1)(K+1)} = 0
%$$
%as a summation of consecutive roots of unity, and it follows that
%$$
% |\langle f_0, f_a \rangle | = \frac{1}{\sqrt{K}}.
%$$
% If $a = 0 \mod K-1$, then
%$$
%\sum\limits_{ \tiny \begin{array}{cc}
%p \in \{0, 1,2,...,K-2\} 
%\\
%q \in \{0, 1,2,...,K\} 
%\end{array}
%}e^{2 \pi i a (p (K+1) + q) / (K-1)(K+1)} = 
%\sum\limits_{ \tiny \begin{array}{cc}
%p \in \{0, 1,2,...,K-2\} 
%\\
%q \in \{0, 1,2,...,K\} 
%\end{array}
%}
%e^{2 \pi i  q / (K+1)}
%$$
\end{proof}

\begin{cor}
Let $q$ be a prime power and let ${\mathcal F}'= \{h_j\}_{j \in {\mathbb Z}_{M}}$ be a cyclic frame  generated by 
a $(N,L,K,1)$-relative difference set with $N=q+1$, $L=q-1$ and $K=q$, then 
$$
|\langle h_j, h_l \rangle | %= \sqrt{\frac{K^2 -2K+1}{K(K^2-K)}} 
\le \frac{1}{\sqrt K} 
$$
for all $j , l \in \Zn{M}$ with $j \neq l$ and
if $e_i$ is a canonical basis vector, then $| \langle e_i, h_j \rangle | = \frac{1}{\sqrt K}$, so
$ \mathcal F = \{e_i\}_{i=1}^K \cup \{h_j \}_{j=0}^{NL-1} $
% F_{N,K} = \left(
%\begin{array}{cc}
% I_K  & H_{N'} 
%\end{array}
%\right)
%$$
%is the synthesis matrix for 
is a tight OGF in $\Omega_{K^2+K-1,K}$, as outlined in Theorem~\ref{thmain}.
\end{cor}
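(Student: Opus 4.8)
The plan is to assemble this corollary directly from the tools already in place, with essentially no new computation. First I would make sure the hypothesis is non-vacuous by invoking the relative difference set theorem cited from \cite{MR1440858} with $n=1$: for a prime power $q$ it produces a $(q+1,q-1,q,1)$-relative difference set in $\Zn{(q+1)(q-1)} = \Zn{q^2-1}$, that is, a picket fence sequence $S=\{n_1,\dots,n_K\}$ for $\Zn{M}$ with $K=q$ and $M=NL=(q+1)(q-1)=q^2-1=K^2-1$. This is exactly the setting required by Theorem~\ref{thmpartdiffsetsgiveortho}.

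Next I would let $\mathcal F'=\{h_j\}_{j\in\Zn{M}}$ be the cyclic frame generated by $S$ and read off the inner product bound. By Theorem~\ref{thmpartdiffsetsgiveortho}, $|\langle h_0,h_a\rangle|$ equals $\tfrac1K$ when $a\equiv 0\bmod(K-1)$ and $\tfrac1{\sqrt K}$ otherwise, for $a\neq 0$; combined with the shift identity $|\langle h_j,h_l\rangle| = |\langle h_0,h_{l-j}\rangle|$ this yields $|\langle h_j,h_l\rangle|\le \tfrac1{\sqrt K}$ for all $j\neq l$, which is the first displayed bound. The second identity is immediate from Definition~\ref{cyclicdef}, since every cyclic frame vector is flat with respect to the canonical basis: $|\langle e_i,h_j\rangle| = \tfrac{1}{\sqrt K}\,\bigl|e^{-2\pi i j n_i/M}\bigr| = \tfrac1{\sqrt K}$ for all $i,j$.

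For the tightness claim I would supply the one short argument that is actually needed: that $\mathcal F'$ is itself a tight frame for $\mathbb C^K$. Its frame operator is the modulation operator $X_0=\sum_{j\in\Zn{M}} h_j\otimes h_j^*$, and Proposition~\ref{cyclicmodcomp} gives $(X_0)_{a,b}=\tfrac{M}{K}$ when $n_b-n_a=0$ and $0$ otherwise; since the elements of $S$ are distinct, $n_b-n_a=0$ forces $a=b$, so $X_0=\tfrac{M}{K}I_K$. Hence $\mathcal F'$ is $\tfrac{M}{K}$-tight and, in particular, spans $\mathbb C^K$. Finally, $N=M+K=K^2+K-1\ge K^2+1$ because $K=q\ge 2$, so all hypotheses of Theorem~\ref{thmain} are met: $\mathcal F=\{e_i\}_{i=1}^K\cup\mathcal F'$ is an OGF in $\Omega_{K^2+K-1,K}$, and it is tight as the union of two tight frames for $\mathbb C^K$.

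I do not expect a genuine obstacle here, since the substantive work was done in Theorem~\ref{thmpartdiffsetsgiveortho} and Theorem~\ref{thmain}; the only points requiring care are remembering to verify that $\mathcal F'$ is tight (so that the tightness addendum of Theorem~\ref{thmain} applies) and checking the parameter arithmetic $NL=q^2-1=K^2-1$ together with $N=K^2+K-1\ge K^2+1$, which places us in the orthoplex regime.
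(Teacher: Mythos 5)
Your proposal is correct and follows essentially the same route the paper intends: the corollary is a direct combination of Theorem~\ref{thmpartdiffsetsgiveortho} (the inner-product values for the picket-fence cyclic frame), the flatness of cyclic frames, and Theorem~\ref{thmain}, after checking $M+K=K^2+K-1\ge K^2+1$. Your explicit verification that $\mathcal F'$ is tight (via Proposition~\ref{cyclicmodcomp}, $X_0=\tfrac{M}{K}I_K$ since the $n_l$ are distinct) is a detail the paper leaves implicit, and it is exactly what is needed for the tightness clause of Theorem~\ref{thmain}.
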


As in the case of the maximal set of mutually unbiased bases, we can obtain
more examples of OGFs of smaller sizes by subselection. 

\begin{cor}
By removing between $1$ and $K-2$ vectors from the orthonormal basis in $\mathcal F$ in the preceding corollary, we obtain a subset of $\mathcal F$
with size between $K^2+1$ and $K^2+K-2$ which also saturates the orthoplex bound.
\end{cor}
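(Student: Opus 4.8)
The plan is to argue that removing vectors from the orthonormal basis preserves both the frame property and the orthoplex-bound-saturating property, so the resulting configuration is still an OGF (though in general no longer tight). First I would set up notation: let $\mathcal F = \{e_i\}_{i=1}^K \cup \{h_j\}_{j \in \Zn{M}}$ with $M = K^2 + K - 1 - K = K^2 - 1$ be the tight OGF from the preceding corollary, and let $\mathcal F_r$ denote the configuration obtained by deleting $r$ basis vectors, say $e_{K-r+1}, \dots, e_K$, where $1 \le r \le K-2$. The total number of vectors is then $N_r = (K^2 + K - 1) - r$, which ranges between $K^2 + 1$ (when $r = K-2$) and $K^2 + K - 2$ (when $r = 1$).

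Next I would check that $\mathcal F_r$ is still a frame for $\mathbb C^K$. This is where the picket fence structure matters: the cyclic frame $\mathcal F' = \{h_j\}_{j \in \Zn{M}}$ is itself a tight frame for $\mathbb C^K$ (it is generated by a relative difference set, which forces $\sum_j h_j \otimes h_j^* = \frac{M}{K} I_K$ by the same modulation-operator computation as in Proposition~\ref{cyclicmodcomp} evaluated at $\xi = 0$), so $\mathcal F'$ alone already spans $\mathbb C^K$. Hence $\mathcal F_r \supseteq \mathcal F'$ spans $\mathbb C^K$ and is a frame; since $N_r \ge K^2 + 1$, the orthoplex bound $\mu(\mathcal F_r) \ge 1/\sqrt K$ applies. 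Finally, since $\mathcal F_r \subseteq \mathcal F$, we have $\mu(\mathcal F_r) \le \mu(\mathcal F) = 1/\sqrt K$, so $\mu(\mathcal F_r) = 1/\sqrt K$ and $\mathcal F_r$ is an OGF.

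I expect no serious obstacle here; the only point requiring a moment's care is verifying that $\mathcal F'$ by itself is a tight frame (equivalently, that the $h_j$ span $\mathbb C^K$), which is exactly the tightness claim already invoked in the preceding corollary via Theorem~\ref{thmain}. One should also note that after deleting basis vectors the frame bound $A$ drops and $\mathcal F_r$ is generally no longer tight, so the statement claims only that it "saturates the orthoplex bound," not that it remains a tight OGF — this matches the analogous subselection remark for mutually unbiased bases in the introduction. The count of how many vectors remain is the routine arithmetic above, and the bounds $1 \le r \le K-2$ on the number of deleted vectors are exactly what keeps $N_r$ in the admissible range $K^2 + 1 \le N_r \le K^2 + K - 2$.
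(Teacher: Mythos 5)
Your argument is correct and is essentially the reasoning the paper leaves implicit (this corollary is stated without proof): a subset cannot increase $\mu$, while the orthoplex bound still applies because the remaining number of vectors is at least $K^2+1$, and the spanning/frame property survives since the cyclic frame $\mathcal F'$ alone is tight. Your added checks (that $\mathcal F'$ spans, the vector count, and the loss of tightness) are exactly the right points of care and match the paper's remark that these strict subsets are no longer tight.
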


However, these (strict) subsets do not form a tight OGF, which makes them less interesting.

We used the software package GAP (Groups, Algorithms, Programming) version 4.7.8 to perform an exhaustive search for picket fence sequences.  Part of the results are presented in Table~\ref{tab:picket}.  For each $K$, if a picket fence sequence exists, then we list an example, otherwise, it is marked DNE.

\begin{table}
\begin{tabular}{| l | l |  }
\hline K & Picket Fence Sequence \\ 
\hline 3 & \{0, 1, 3\}\\ 
\hline 4 & \{0, 1, 3, 7\}\\ 
\hline 5 & \{0, 1, 3, 11, 20\}\\
\hline 6 & DNE\\
\hline 7 & \{0, 1, 3, 15, 20, 38, 42\}\\
\hline 8 & \{0, 1, 3, 7, 15, 20, 31, 41\}\\
\hline 9 & \{0, 1, 3, 9, 22, 27, 34, 38, 66\} \\
\hline 10 & DNE\\
\hline \end{tabular}
\caption{Examples of picket fence sequences for $K=q$, in ${\mathbb Z}_{q^2-1}$.}\label{tab:picket}
\end{table}

\section{Weighted complex projective $2$-designs}
In this section, we point out the significance of the orthoplectic Grassmannian frames constructed here for quantum state determination by showing that certain OGFs form weighted complex projective $2$-designs.

The \textem{complex projective space, $\CPk{K-1}$,} 
 is the set of all lines passing through the origin in $\mathbb C^K$.  Every line $[x] \in \CPk{K-1}$ can be represented by a unit vector $x \in \mathbb C^K,$ which yields the rank one projection operator \textem{$\pi(x) = x \otimes x^*$}.

\begin{defn}
Let $\mathcal S = \{[x_j]\}_{j \in \Zn{N}} \subset \CPk{K-1}$ and $w: {\mathbb C}^K \rightarrow (0,1]$ be normalized so that $\sum\limits_{j \in \Zn{N}} w(x_j)=1$.
The pair $(\mathcal S, w)$ is a
 \textem{weighted complex projective $t$-design of dimension $K$}, or just \textem{weighted $t$-design}, if
$$
\sum\limits_{j \in \Zn{N}} w(x_j) \pi(x_j)^{\otimes t} = 
\int\limits_{\CPk{K-1}} \pi(x)^{\otimes t} d\mu(x)
= \left( \begin{array}{cc} K+t-1 \\ t \end{array} \right)^{-1} \Pi_{\text{sym}}^{(t)},
$$
where $\Pi_{\text{sym}}^{(t)}$ denotes the projection onto the totally symmetric subspace of $\left( \mathbb C^K \right)^{\otimes t}$ and $\mu$ denotes the unique unitarily invariant probability measure on $\CPk{K-1}$ induced by the Haar measure on the group of $K\times K$ unitaries.
\end{defn}

The following theorem characterizes weighted $t$-designs in terms of the Gramian of $\{x_j\}_{j \in {\mathbb Z}_N}$.
\begin{thm}[\cite{MR2337670}]\label{thchar2designs}
If $\mathcal S = \{[x_j]\}_{j \in \Zn{N}} \subset \CPk{K-1}$ and $w: {\mathbb C}^K \rightarrow (0,1]$, normalized so that $\sum\limits_{j \in \Zn{N}} w(x_j)=1$, then
the pair $(\mathcal S, w)$ is a weighted $t$-design if and only if 
\begin{equation}%\label{eqchar2designs}
\sum_{j, l \in \Zn{N}} w(x_j) w(x_l) |\langle x_j, x_l \rangle |^{2t} = \left( \begin{array}{cc} K+t-1 \\ t \end{array} \right)^{-1}
.\end{equation}
\end{thm}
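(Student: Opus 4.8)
The plan is to recast the defining identity of a weighted $t$-design as an extremal property of the positive operator
$$
A = \sum_{j \in \Zn{N}} w(x_j)\, \pi(x_j)^{\otimes t}
$$
on $(\mathbb C^K)^{\otimes t}$, and then to recognize the left-hand side of the displayed equation as $\|A\|_{H.S.}^2$. Since the middle equality in the definition records the known fact that $\int_{\CPk{K-1}} \pi(x)^{\otimes t}\, d\mu(x) = \binom{K+t-1}{t}^{-1}\Pi_{\text{sym}}^{(t)}$, the property that $(\mathcal S, w)$ is a weighted $t$-design is precisely the statement $A = \binom{K+t-1}{t}^{-1}\Pi_{\text{sym}}^{(t)}$, and this is what I would aim to characterize.

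First I would record the elementary trace identities for tensor powers of rank-one projections, both of which are instances of the multiplicativity of the trace under tensor products: $\TR(\pi(x)^{\otimes t}) = (\TR \pi(x))^{t} = 1$ for every unit vector $x$, and, for unit vectors $x$ and $y$,
$$
\TR(\pi(x)^{\otimes t}\pi(y)^{\otimes t}) = (\TR(\pi(x)\pi(y)))^{t} = |\langle x, y\rangle|^{2t}.
$$
From the first identity and the normalization $\sum_{j \in \Zn{N}} w(x_j) = 1$ one gets $\TR A = 1$; from the second,
$$
\|A\|_{H.S.}^{2} = \TR(A^{2}) = \sum_{j, l \in \Zn{N}} w(x_j)\, w(x_l)\, |\langle x_j, x_l\rangle|^{2t},
$$
which is exactly the quantity on the left in the claim. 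I would also note that $x^{\otimes t}$ lies in the totally symmetric subspace of $(\mathbb C^K)^{\otimes t}$, so each $\pi(x_j)^{\otimes t}$ — the orthogonal projection onto the line through $x_j^{\otimes t}$ — and hence $A$, has range inside that subspace, i.e. $A\, \Pi_{\text{sym}}^{(t)} = A$. Consequently $\TR(A\, \Pi_{\text{sym}}^{(t)}) = \TR A = 1$, while $\TR \Pi_{\text{sym}}^{(t)} = \dim \mathrm{Sym}^{t}(\mathbb C^{K}) = \binom{K+t-1}{t}$.

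The crux is then the Cauchy--Schwarz inequality in the real Hilbert space of Hermitian operators on $(\mathbb C^K)^{\otimes t}$, applied to $A$ and $\Pi_{\text{sym}}^{(t)}$:
$$
1 = \TR(A\, \Pi_{\text{sym}}^{(t)}) \le \|A\|_{H.S.}\, \|\Pi_{\text{sym}}^{(t)}\|_{H.S.} = \|A\|_{H.S.}\, \sqrt{\binom{K+t-1}{t}}.
$$
Squaring yields $\sum_{j,l} w(x_j) w(x_l) |\langle x_j, x_l\rangle|^{2t} = \|A\|_{H.S.}^{2} \ge \binom{K+t-1}{t}^{-1}$ for every pair $(\mathcal S, w)$, with equality if and only if $A$ and $\Pi_{\text{sym}}^{(t)}$ are proportional; since $\TR A = 1$ and $\TR \Pi_{\text{sym}}^{(t)} = \binom{K+t-1}{t}$, the proportionality constant is forced to be $\binom{K+t-1}{t}^{-1}$, so equality holds exactly when $A = \binom{K+t-1}{t}^{-1}\Pi_{\text{sym}}^{(t)}$, that is, exactly when $(\mathcal S, w)$ is a weighted $t$-design. (The converse direction is in any case immediate: if $A = \binom{K+t-1}{t}^{-1}\Pi_{\text{sym}}^{(t)}$ then $\TR(A^{2}) = \binom{K+t-1}{t}^{-2}\TR \Pi_{\text{sym}}^{(t)} = \binom{K+t-1}{t}^{-1}$.)

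The few points I would handle with care — essentially the only places where anything beyond routine trace bookkeeping enters — are the verification that $A$ is genuinely supported on the symmetric subspace (so that $\TR(A\, \Pi_{\text{sym}}^{(t)}) = \TR A$ and the Cauchy--Schwarz pairing with $\Pi_{\text{sym}}^{(t)}$ equals $1$), the dimension count $\dim \mathrm{Sym}^{t}(\mathbb C^{K}) = \binom{K+t-1}{t}$, and the correct use of the equality case of Cauchy--Schwarz, namely that equality between these two Hermitian operators forces genuine proportionality $A = \lambda\, \Pi_{\text{sym}}^{(t)}$ with $\lambda > 0$; here positivity of $A$, as a nonnegative combination of orthogonal projections, makes the last point automatic. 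Everything else is bookkeeping.
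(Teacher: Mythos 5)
Your argument is correct and complete. Note that the paper does not prove this statement at all: it is quoted from the reference cited in the theorem header, so there is no in-paper proof to compare against. Your route is essentially the standard one from that literature (a frame-potential/Cauchy--Schwarz argument): since each $\pi(x_j)^{\otimes t}$ is the projection onto the line through $x_j^{\otimes t}$, the operator $A=\sum_j w(x_j)\pi(x_j)^{\otimes t}$ is a trace-one positive operator supported on the symmetric subspace, the displayed sum is $\TR(A^2)$, and the pairing $\TR(A\,\Pi_{\text{sym}}^{(t)})=1$ together with $\TR\,\Pi_{\text{sym}}^{(t)}=\binom{K+t-1}{t}$ gives the lower bound with equality precisely when $A$ is the normalized projection, i.e.\ precisely the design condition as stated in the paper's definition (which already records the value of the integral, so you rightly do not need to recompute it). The equality case is handled correctly: saturation of the bound forces equality in Cauchy--Schwarz, hence proportionality $A=\lambda\Pi_{\text{sym}}^{(t)}$, and the trace normalization pins down $\lambda=\binom{K+t-1}{t}^{-1}$; the converse is the one-line computation you give. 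In short: a valid self-contained proof of a result the paper only cites.
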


In order to form a (weighted) 2-design, the projectors $\pi(x_j)$ are required to span the space of $K\times K$ matrices \cite[Theorem 4]{MR2269701}, 
which implies that $N\ge K^2$.
In addition, \cite[Theorem 4]{MR2269701} shows that a weighted 2-design of dimension $K$ has $K^2$ elements if and only if
it is formed by an equiangular tight frame of $K^2$ unit-norm vectors. Moreover, a given number of positive matrices
$\{A_j\}_{j=1}^N$ that span the space of complex $K\times K$ matrices and satisfy $\sum_{j=1}^N A_j = I$
are optimal for linear quantum state tomography from a measurement with $N$ outcomes 
if and only if the matrices are all of rank one and obtained from a weighted $2$-design $\{[x_j]\}_{j=1}^N$ according to
$A_j = \pi(x_j)$ \cite[Corollary 19]{MR2269701}.

Using the characterization, we show that OGFs obtained through the Singer construction yield such weighted $2$-designs.

\begin{thm}
If $\mathcal F = \{e_i\}_{i=1}^K \cup \mathcal F'$ is a frame for $\mathbb C^K$ 
formed by adjoining the canonical basis $\{e_i\}_{i=1}^K$ with  
%with synthesis matrix $\left( \begin{array}{cc}I_K  & \mathcal W^*\end{array}  \right)$, where $\mathcal W^*$ is 
%a cyclic frame generated by 
an equiangular tight frame $\mathcal F'=\{h_j\}_{j=1}^M$ of $M=K^2-K+1$ flat vectors, then the pair $(\mathcal S, w)$ 
with the set $\mathcal S = \{[x]: x \in \mathcal F\}$ and the weights defined by
$$
w(e_i)  = %\left\{ 
%\begin{array}{cc}
\alpha = \frac{K^2 - K +1}{K (K^3 +1)} \, , \quad i \in \{1, 2, \dots, K\} \, ,
$$
and
$$
%, & \text{$f$ is a column of $I_K$} \\
w(h_j) = \beta  = \frac{K}{K^3 +1} \, , \quad j \in {\mathbb Z}_M , %, & \text{$f$ is a column of $\mathcal W^*$} \\
%\end{array}
%\right. .
$$
forms a weighted complex projective 2-design.
\end{thm}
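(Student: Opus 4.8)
The plan is to verify the Gramian characterization from Theorem~\ref{thchar2designs} with $t=2$ by splitting the double sum $\sum_{j,l} w(x_j) w(x_l) |\langle x_j, x_l\rangle|^4$ according to whether each of the two vectors comes from the canonical basis $\{e_i\}_{i=1}^K$ or from the equiangular tight frame $\mathcal F'$. There are four blocks. In the basis--basis block, $|\langle e_i, e_{i'}\rangle|^4$ is $1$ when $i=i'$ and $0$ otherwise, contributing $K\alpha^2$. In the two mixed blocks (basis--$\mathcal F'$ and $\mathcal F'$--basis), flatness gives $|\langle e_i, h_j\rangle|^4 = 1/K^2$ for every pair, so each mixed block contributes $K \cdot M \cdot \alpha\beta / K^2 = M\alpha\beta/K$, for a total of $2M\alpha\beta/K$. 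The $\mathcal F'$--$\mathcal F'$ block is the substantive one: the diagonal terms give $M\beta^2$, and the off-diagonal terms use that $\mathcal F'$ is equiangular with $|\langle h_j, h_l\rangle|^2 = \frac{1}{K} - \frac{1}{K^2} = \frac{K-1}{K^2}$ (this is Theorem~\ref{thmdiffsetsgiveortho} applied to the Singer $(M,K,1)$-difference set, since $M = K^2-K+1$), so each of the $M(M-1)$ off-diagonal terms contributes $\beta^2 (K-1)^2/K^4$.

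Collecting, the identity to check is
\begin{equation*}
K\alpha^2 + \frac{2M\alpha\beta}{K} + M\beta^2 + M(M-1)\beta^2 \frac{(K-1)^2}{K^4} = \binom{K+1}{2}^{-1} = \frac{2}{K(K+1)},
\end{equation*}
with $M = K^2 - K + 1$, $\alpha = \frac{K^2-K+1}{K(K^3+1)}$, $\beta = \frac{K}{K^3+1}$. The main task is to confirm this is an algebraic identity in $K$. I would first note $K^3 + 1 = (K+1)(K^2-K+1) = (K+1)M$, so $\alpha = \frac{M}{K(K+1)M} = \frac{1}{K(K+1)}$ and $\beta = \frac{K}{(K+1)M}$; this makes the weights transparent and also confirms the normalization $K\alpha + M\beta = \frac{1}{K+1} + \frac{K}{K+1} = 1$. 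Substituting these simplified forms, each of the four terms becomes a rational function with a common denominator built from $K$, $K+1$, and $M$, and the claim reduces to a polynomial identity that can be expanded and checked directly.

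The one genuine subtlety is making sure the equiangularity constant is the one supplied by the paper: the cyclic frame generated by a Singer $(M,K,1)$-difference set with $M = K^2-K+1$ has all off-diagonal inner product magnitudes equal to $\frac{\sqrt{K-1}}{K}$ by Theorem~\ref{thmdiffsetsgiveortho}, so $|\langle h_j,h_l\rangle|^4 = \frac{(K-1)^2}{K^4}$, which is exactly what enters above. Everything else is bookkeeping: flatness handles the mixed blocks, orthonormality handles the basis block, and the definition of a weighted $2$-design via Theorem~\ref{thchar2designs} reduces the whole claim to the single scalar equation. I expect no conceptual obstacle — the only place an error could creep in is an arithmetic slip in the final polynomial expansion, which is why rewriting $\alpha$ and $\beta$ in lowest terms using the factorization $K^3+1 = (K+1)(K^2-K+1)$ before substituting is worth doing first.
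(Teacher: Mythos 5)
Your proposal is correct and follows essentially the same route as the paper: both decompose the weighted fourth-moment sum into the basis--basis, mixed, and $\mathcal F'$--$\mathcal F'$ blocks, yielding $K\alpha^2 + \tfrac{2M\alpha\beta}{K} + M\beta^2 + M(M-1)\beta^2\tfrac{(K-1)^2}{K^4}$, and then verify this equals $\binom{K+1}{2}^{-1}$ using $M = K^2-K+1$ before invoking Theorem~\ref{thchar2designs} with $t=2$. Your preliminary simplification via $K^3+1=(K+1)M$, giving $\alpha = \tfrac{1}{K(K+1)}$ and $\beta=\tfrac{K}{(K+1)M}$ and confirming the normalization, is a pleasant bookkeeping aid but not a different argument.
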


\begin{proof}
Let $A = ( |\langle f_a, f_b \rangle |^4)_{j, l \in \Zn{N}}$, which is the matrix whose entries are the magnitudes of the entries of the Gramian of $\mathcal F$ raised to the fourth power. This matrix has the block form
$$
A = \left( \begin{array}{cc} I_K & B \\ B^* & A'   \end{array}   \right),
$$
where $B$ contains the magnitudes of the corresponding entries in the $K \times M$ cross-Gramian between the orthonormal basis vectors and ${\mathcal F}'$
raised to the fourth power, so all entries of $B$ are $\frac{1}{K^2}$. The submatrix $A'$ contains the fourth powers of the magnitudes of the entries of the Gramian of $\mathcal F'$, so it has ones along its diagonal and $\frac{(K-1)^2}{K^4}$ as its off-diagonal entries.
  Summing the weighted entries of $A$, %in accordance with the characterization of $t$-designs
  %by Theorem~\ref{thchar2designs} for $t=2$,
%from Theorem~\ref{thchar2designs}
we obtain
\begin{align*}
\sum_{a, b \in \Zn{N}} w(f_a) w(f_b) |\langle f_a, f_b \rangle |^{4} &=\alpha^2 K + \frac{2 \alpha \beta M}{K} + \beta^2\left(M + M (M-1) \frac{(K-1)^2}{K^4}\right) \\
& =  \left( \begin{array}{cc} K +1 \\ 2  \end{array}   \right)^{-1},
\end{align*}
where the last equality follows because $M = K^2 - K +1$, which completes the proof by 
 the characterization of $t$-designs
 for $t=2$
according to Theorem~\ref{thchar2designs}.
\end{proof}

\begin{cor}
If $q$ is a prime power, $K=q+1$ and $M=q^2+q+1=K^2-K+1$, then the cyclic frame $\mathcal F'=\{h_j\}_{j=1}^M$ associated
with the $(M,K,1)$-difference set constructed by Singer provides a complex weighted 2-design $({\mathcal S},w)$
with $\mathcal S$ and $w$ related to $\mathcal F = \{e_i\}_{i=1}^K \cup \mathcal F'$ as in the preceding theorem.
\end{cor}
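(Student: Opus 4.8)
The plan is to check that the cyclic frame $\mathcal F'$ generated by the Singer $(M,K,1)$-difference set satisfies every hypothesis of the preceding theorem, and then to invoke that theorem directly. Concretely, three things must be verified: that $\mathcal F'$ consists of flat unit vectors, that it is equiangular, and that it is a tight frame for $\mathbb C^K$ with exactly $M = K^2-K+1$ vectors. Once these are in place, $\mathcal F'$ is an equiangular tight frame of $M = K^2-K+1$ flat vectors, which is exactly the input the preceding theorem requires, and the conclusion follows verbatim with the weights $\alpha = (K^2-K+1)/(K(K^3+1))$ on the canonical basis vectors and $\beta = K/(K^3+1)$ on the $h_j$.

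First I would dispose of flatness and the vector count. By Definition~\ref{cyclicdef} every cyclic frame is flat with respect to the canonical basis and consists of unit vectors, so $\mathcal F'$ is flat. The case $n=1$ of the Singer theorem produces a $(M,K,1)$-difference set with $M = q^2+q+1$ and $K = q+1$, and, as already recorded in the text, $M+K = (q+1)^2+1 = K^2+1$, hence $M = K^2-K+1$. Equiangularity is then immediate from Theorem~\ref{thmdiffsetsgiveortho}: since $M = K^2-K+1$ and $S$ is a $(M,K,1)$-difference set, $|\langle h_j,h_l\rangle| = \sqrt{K-1}/K$ for all $j\ne l$, so the off-diagonal Gramian entries share a common magnitude. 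Tightness I would obtain from Proposition~\ref{cyclicmodcomp} applied at $\xi = 0$: the operator $X_0 = \sum_{j \in \Zn{M}} h_j \otimes h_j^*$ is precisely the frame operator of $\mathcal F'$, and its entries are $(X_0)_{a,b} = M/K$ when $n_b - n_a = 0$ and $0$ otherwise; because the elements of $S$ are distinct, $n_b = n_a$ forces $a = b$, so $X_0 = (M/K)\,I_K$ and $\mathcal F'$ is an $M/K$-tight frame for $\mathbb C^K$.

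With all three properties established, the preceding theorem applies directly to $\mathcal F = \{e_i\}_{i=1}^K \cup \mathcal F'$ and yields the asserted weighted complex projective $2$-design $(\mathcal S, w)$. I do not expect any genuine obstacle here: the argument is a matter of assembling facts already proved in the excerpt. The only step that is not a pure citation is the tightness of the difference-set cyclic frame, and that is settled in a single line by evaluating $X_0$ via Proposition~\ref{cyclicmodcomp}.
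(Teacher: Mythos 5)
Your proposal is correct and follows the route the paper intends: the paper states this corollary without a written proof, relying on the fact (cited earlier in the text) that cyclic frames generated by difference sets are equiangular tight frames, together with flatness of cyclic frames and the preceding theorem. Your verification supplies exactly those ingredients from the paper's own results -- equiangularity via Theorem~\ref{thmdiffsetsgiveortho} and tightness via the evaluation $X_0 = (M/K)I_K$ from Proposition~\ref{cyclicmodcomp} (using distinctness of the difference-set elements) -- so it is a sound, self-contained version of the same argument.
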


Similarly, OGFs obtained through the picket fence construction also yield weighted 2-designs.

\begin{thm}\label{thm_picket_wtd2dsns}
If $\mathcal F = \{e_i\}_{i=1}^K \cup \mathcal F'$ is a frame for $\mathbb C^K$ formed by  adjoining the canonical basis $\{e_i\}_{i=1}^K$ with 
the union of 
flat unit-norm equiangular tight frames ${\mathcal F}'= \cup_{l=1}^{K-1} \{ h^{(l)}_j \}_{j \in \Zn{K+1} }$ for $\mathbb C^K$ that
are pairwise  mutually unbiased, so
$$
    | \langle h^{(l)}_j , h^{(l')}_{j'} \rangle |= \delta_{l,l'} \left( \frac 1 K + \delta_{j,j'} \frac{K-1}{K}\right) + (1-\delta_{l,l'}) \frac{1}{\sqrt K} ,
$$
%generated by a picket fence sequence, 
then the pair $({\mathcal S}, w)$ with the set $\mathcal S = \{[x]: x \in \mathcal F\}$ and the weights defined by
$$
w(e_i)  = %\left\{ 
%\begin{array}{cc}
\alpha = \frac{1}{K (K+1)} \, , \quad i \in \{1, 2, \dots, K\} \, ,
$$
and
$$
w(h_j^{(l)}) = 
\beta  = \frac{K}{(K+1)(K^2-1)} \, , \quad j \in \Zn{K+1}, l \in \Zn{K-1},
%\end{array}
%\right. .
$$
forms a weighted 
complex projective 2-design.
\end{thm}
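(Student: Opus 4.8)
The plan is to verify the Gramian characterization of weighted $2$-designs in Theorem~\ref{thchar2designs} directly, exactly as in the proof for the difference-set construction above. First I would record that $\mathcal F$ has $N = K + (K-1)(K+1) = K^2+K-1$ unit-norm vectors and that the weights are admissible, since $K\alpha + (K^2-1)\beta = \tfrac{1}{K+1} + \tfrac{K}{K+1} = 1$. Ordering the $K$ canonical basis vectors first, I would form the matrix $A = (|\langle f_a, f_b\rangle|^4)_{a,b \in \Zn{N}}$ of fourth powers of the Gramian magnitudes; it has the block form
$$
A = \left( \begin{array}{cc} I_K & B \\ B^* & A' \end{array} \right),
$$
where $B$ is the $K \times (K^2-1)$ block of fourth powers of the cross-Gramian between $\{e_i\}_{i=1}^K$ and $\mathcal F'$. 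Because each $h^{(l)}_j$ is flat, every entry of $B$ equals $1/K^2$, while $A'$ collects the fourth powers of the entries of the Gramian of $\mathcal F'$.

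Next I would read $A'$ off from the stated inner-product identity: it has $1$'s on the diagonal, the value $1/K^4$ in off-diagonal positions lying in a common sub-frame ($l = l'$, $j \neq j'$), and the value $1/K^2$ in every position coming from two distinct sub-frames ($l \neq l'$). Counting ordered pairs gives $K^2-1$ diagonal entries, $(K-1)(K+1)K$ within-sub-frame off-diagonal entries, and $(K-1)(K-2)(K+1)^2$ cross-sub-frame entries, so the sum of the entries of $A'$ is
$$
(K^2-1) + \frac{(K-1)(K+1)K}{K^4} + \frac{(K-1)(K-2)(K+1)^2}{K^2} = \frac{(K^2-1)^2(2K-1)}{K^3}.
$$
Evaluating the weighted double sum in Theorem~\ref{thchar2designs} then breaks into the basis-basis term $\alpha^2 K$, the basis-frame cross term $2\alpha\beta\cdot\tfrac{K^2-1}{K}$, and the frame-frame term $\beta^2$ times the sum above. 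Substituting $\alpha = \tfrac{1}{K(K+1)}$ and $\beta = \tfrac{K}{(K+1)(K^2-1)}$, these become $\tfrac{1}{K(K+1)^2}$, $\tfrac{2}{K(K+1)^2}$, and $\tfrac{2K-1}{K(K+1)^2}$, which add up to $\tfrac{2}{K(K+1)} = \binom{K+1}{2}^{-1}$. By Theorem~\ref{thchar2designs} with $t=2$, this identifies $(\mathcal S, w)$ as a weighted complex projective $2$-design.

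Everything in this argument is bookkeeping once the block decomposition is fixed, so the only point needing care — and the main, if modest, obstacle — is the combinatorics: cleanly separating the $\delta_{l,l'}$ and $(1-\delta_{l,l'})$ regimes of the hypothesis, getting the three ordered-pair counts among the $h^{(l)}_j$ right, and confirming that the numerator $K^3 + 1 + K(K-2)(K+1)$ appearing in $\sum A'$ factors as $(K-1)(2K-1)(K+1)$ so that the final tally collapses to $2/(K(K+1))$. I would also note, for context, that the hypothesis is genuinely realized by the picket fence construction: the cyclic frame of $K^2-1$ vectors generated by a picket fence sequence decomposes, in view of Theorem~\ref{thmpartdiffsetsgiveortho} and the forbidden-subgroup property, into $K-1$ flat equiangular tight sub-frames of $K+1$ vectors (indexed by the cosets of the order-$(K+1)$ subgroup of $\Zn{M}$), with pairwise inner-product magnitude $1/K$ inside a sub-frame and $1/\sqrt K$ across sub-frames, so that the resulting $\mathcal F$ is precisely the OGF of size $K^2+K-1$ constructed earlier.
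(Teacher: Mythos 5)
Your proof is correct and takes essentially the same approach as the paper: the same block decomposition of the fourth-power Gramian matrix $A$, the same entry values ($1/K^2$ throughout $B$, and in $A'$ your global ordered-pair counts are exactly the paper's per-row tally of one $1$, $K$ entries equal to $1/K^4$, and $(K-2)(K+1)$ entries equal to $1/K^2$, summed over the $K^2-1$ rows), and the same verification of the criterion of Theorem~\ref{thchar2designs} with $t=2$, with all arithmetic checking out. Your extra remarks (the weight normalization $K\alpha+(K^2-1)\beta=1$ and the realization of the hypotheses by picket fence sequences) are fine but not needed here; the latter is the content of the paper's subsequent corollary.
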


\begin{proof}
Let $A = ( |\langle f_a, f_b \rangle |^4)_{a,b \in \Zn{N}}$, which is the matrix whose entries are the magnitudes of the entries of the Gramian of $\mathcal F$ raised to the fourth power. This matrix has the block form
$$
A = \left( \begin{array}{cc} I_K & B \\ B^* & A'   \end{array}   \right),
$$
where $B$ contains the magnitudes of the $K \times M$ cross-Gramian between the orthonormal basis and ${\mathcal F}'$
raised to the fourth power, so its entries are all $\frac{1}{K^2}$, and where $A'$ denotes the $M \times M$ matrix whose entries are the fourth powers of the magnitudes of the entries of the Gramian of ${\mathcal F}'$, which has 
one instance of $1$, $K$ instances of $\frac{1}{K^4}$ and $(K-2)(K+1)$ instances of $\frac{1}{K^2}$ occuring in each of its rows.
 Summing the weighted entries of $A$, 
%from Theorem~\ref{thchar2designs}
we obtain
\begin{align*}
\sum_{a,b \in \Zn{N}} w(f_a) w(f_b) |\langle f_a, f_b \rangle |^{2t} &=\alpha^2 K + \frac{2 \alpha \beta M}{K} + \beta^2 M \left(1 + \frac{1}{K^3} + \frac{(K-2)(K+1)}{K^2} \right) \\
& =  \left( \begin{array}{cc} K +1 \\ 2  \end{array}   \right)^{-1},
\end{align*}
where the last equality follows because $M = K^2 - 1$.
Applying Theorem~\ref{thchar2designs} with $t=2$ then completes the proof.
\end{proof}

\begin{cor}
If $q$ is a prime power, $K=q$ and $M=q^2-1$, then the cyclic frame $\mathcal F'$ associated
with a picket fence sequence provides a complex weighted 2-design $({\mathcal S},w)$
with $\mathcal S$ and $w$ related to $\mathcal F = \{e_i\}_{i=1}^K \cup \mathcal F'$ as in the preceding theorem.
\end{cor}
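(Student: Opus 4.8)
The plan is to obtain this corollary as a direct consequence of Theorem~\ref{thm_picket_wtd2dsns}; the only thing to check is that the cyclic frame $\mathcal F'=\{h_j\}_{j\in\Zn M}$ with $M=K^2-1$ generated by a picket fence sequence is, after a relabeling of indices, the union of $K-1$ pairwise mutually unbiased flat equiangular tight frames of $K+1$ vectors each, with pairwise inner products exactly as in the hypothesis of that theorem. (Existence of a picket fence sequence for $K=q$ a prime power is the $n=1$ case of the relative difference set construction of~\cite{MR1440858}, as noted above.)

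First I would use that the Gramian of a cyclic frame is translation invariant: combining Lemma~\ref{dotFTval} with Corollary~\ref{cyclic_ortho} gives $M^2|\langle h_a,h_b\rangle|^2=\sum_{\xi\in\Zn M}\omega_M^{(b-a)\xi}\|X_\xi\|_{H.S.}^2$, so $|\langle h_a,h_b\rangle|$ depends only on $b-a\bmod M$, and by Theorem~\ref{thmpartdiffsetsgiveortho} this value is $1$ if $a=b$, equals $\tfrac1K$ if $b-a\not\equiv0$ but $b-a\equiv0\pmod{K-1}$, and equals $\tfrac1{\sqrt K}$ if $b-a\not\equiv0\pmod{K-1}$. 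Since $\gcd(K-1,K^2-1)=K-1$, the subgroup $H=(K-1)\Zn M$ of $\Zn M$ has order $(K^2-1)/(K-1)=K+1$, so $\Zn M$ splits into $K-1$ disjoint cosets $C_1,\dots,C_{K-1}$ of $H$. Relabeling the vectors indexed by $C_l$ as $\{h^{(l)}_j\}_{j\in\Zn{K+1}}$ via any bijection $\Zn{K+1}\to C_l$, two indices lie in a common coset exactly when their difference is $\equiv 0\pmod{K-1}$, so $|\langle h^{(l)}_j,h^{(l')}_{j'}\rangle|$ equals $1$ when $(l,j)=(l',j')$, equals $\tfrac1K$ when $l=l'$ and $j\neq j'$, and equals $\tfrac1{\sqrt K}$ when $l\neq l'$ --- precisely the identity assumed in Theorem~\ref{thm_picket_wtd2dsns}.

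It then remains to verify that each block is a flat equiangular tight frame for $\mathbb C^K$. Flatness is automatic since $\mathcal F'$ is cyclic. Each block is a set of $N=K+1$ unit vectors in $\mathbb C^K$ all of whose pairwise inner products have magnitude $\tfrac1K$, and $\tfrac1K=\sqrt{(N-K)/(K(N-1))}$ is exactly the Welch bound for $(N,K)$; equiangularity together with attainment of the Welch bound forces tightness, since $N^2/K\le\bigl\|\sum_j h^{(l)}_j\otimes (h^{(l)}_j)^*\bigr\|_{H.S.}^2=\sum_{j,j'}|\langle h^{(l)}_j,h^{(l)}_{j'}\rangle|^2=N+N(N-1)/K^2=N^2/K$ forces equality in the Cauchy--Schwarz step of Welch's inequality, i.e.\ $\sum_j h^{(l)}_j\otimes(h^{(l)}_j)^*=\tfrac NK I_K$ (which in particular shows each block spans $\mathbb C^K$). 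Thus $\mathcal F'$ meets all hypotheses of Theorem~\ref{thm_picket_wtd2dsns}, and with the weights $\alpha,\beta$ prescribed there the pair $(\mathcal S,w)$ with $\mathcal S=\{[x]:x\in\mathcal F\}$ is a weighted complex projective $2$-design, as claimed.

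I do not expect a genuine obstacle here: all the analytic content already lives in Theorems~\ref{thmpartdiffsetsgiveortho} and~\ref{thm_picket_wtd2dsns}. The only points needing a little care are the elementary bookkeeping that the within-block index differences are exactly the nonzero multiples of $K-1$ and that these cut $\Zn M$ into $K-1$ cosets of size $K+1$, together with the small observation that a unit-norm equiangular frame attaining the Welch bound is automatically tight.
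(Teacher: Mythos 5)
Your proposal is correct and follows the paper's overall route: decompose $\Zn{M}$, $M=K^2-1$, into the $K-1$ cosets of the index subgroup generated by $K-1$, invoke Theorem~\ref{thmpartdiffsetsgiveortho} (plus translation invariance of the cyclic Gramian) to get the within-block magnitude $\tfrac1K$ and cross-block magnitude $\tfrac1{\sqrt K}$, note flatness is automatic, verify each block is a tight frame, and then apply Theorem~\ref{thm_picket_wtd2dsns}. The one place you diverge is the tightness of each block of $K+1$ vectors: the paper factors $h^{(l)}_j = D_l h_j$ with $D_l$ a diagonal unitary and uses the forbidden-subgroup property of the picket fence sequence (the $n_m$ are pairwise distinct mod $K+1$) to compute $\sum_j h_j\otimes h_j^* = \tfrac{K+1}{K}I_K$ explicitly, exhibiting each block as a diagonal-unitary image of a harmonic-type frame; you instead observe that $\tfrac1K$ is exactly the Welch bound for $(K+1,K)$ and deduce tightness from the equality case of the Cauchy--Schwarz/frame-potential estimate $\bigl\|\sum_j h^{(l)}_j\otimes (h^{(l)}_j)^*\bigr\|_{H.S.}^2 \ge N^2/K$. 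Both arguments are valid; yours is shorter and reuses only the already-computed inner products, while the paper's explicit computation gives additional structural information (and makes direct use of the relative-difference-set definition rather than routing through the equiangularity).
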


\begin{proof}
Because $\mathcal F'$ is generated by some picket fence sequence $\{n_1,..., n_K \}$, we can write it as the union
$$
{\mathcal F}'= \cup_{l=1}^{K-1} \{ h^{(l)}_j \}_{j \in \Zn{K+1} },
$$
where 
$$
h^{(l)}_j = \left(e^{2 \pi i n_m [j(K-1) + l] / (K^2-1)} / \sqrt{K} \right)_{m=1}^K
$$
for each $j \in \Zn{K+1}$ and $l \in \{1,..., K-1\}$.  It follows from Theorem~\ref{thmpartdiffsetsgiveortho} that each subset $ \{ h^{(l)}_j \}_{j \in \Zn{K+1}}$ is equiangular, any two of such subsets are mutually unbiased, and these vectors are flat by construction.  It remains to show that each subset is a tight frame for $\mathbb C^K$.  It is straightforward to verify that for each $j \in \Zn{K+1}$ and $l \in \{1,..., K-1\}$, we can write 
$$
h_j^{(l)} = D_l h_j,
$$
where $D_l$ is the diagonal unitary $D_l = diag(e^{2 \pi i n_m l / (K^2-1)})_{m=1}^K$ and $h_j =\left(e^{2 \pi i n_m j / (K+1)}/\sqrt{K}\right)_{m=1}^K$, so it is enough to show that the set  $\{h_j \}_{j \in \Zn{K+1}}$ is a tight frame for $\mathbb C^K$.  
For $m \neq m'$, we have $n_m \not\equiv n_{m'} \mod (K+1)$;  otherwise, $n_m - n_{m'} = r (K+1)$ for some $r \in \mathbb Z$,
 which contradicts that $\{n_1,..., n_K\}$ is a picket fence sequence.  Thus, 
$$
\sum\limits_{j \in \Zn{K+1} } h_j \otimes h_j^* = \left( \sum\limits_{j \in \Zn{K+1}} e^{2 \pi i (n_a - n_b) j /K+1} / K  \right)_{a,b=1}^K = \frac{K+1}{K} I_K,
$$
where the second equality follows from the summation of consecutive powers of $(K+1)$th roots of unity on the off-diagonal entries.  In particular, this verifies that  $\{h_j \}_{j \in \Zn{K+1}}$ is a tight frame for $\mathbb C^K$, so the claim follows by applying Theorem~\ref{thm_picket_wtd2dsns}.
\end{proof}

\bibliography{ogfbib-all}
\bibliographystyle{plain}

%
%\providecommand{\href}[2]{#2}
%\begin{thebibliography}{10}
%
%
%\bibitem{zorlein15}
%H.~Z{\"o}rlein and M. Bossert, \emph{Coherence optimization and best complex antipodal spherical codes}, arXiv: 1404.5889v2, preprint, 2015
%
%
%
%
%\bibitem{henkel05}
%O.~Henkel, \emph{Sphere packing bounds in the Grassmann and Stiefel manifolds},   IEEE Trans. Inf. Theory, vol. 51, no. 10, pp. 3445-3456, Oct. 2005
%
%\bibitem{pitaval11}
%R.-A.~Pitaval, O.~Tirkkonen and S.~Blostein, \emph{Low Complexity MIMO precoding codebooks from orthoplex packings}, in Proc. IEEE Int. Conf. on Commun. (ICC), Jun. 2011. pp. 1-5
%
%\bibitem{bgb15}
%B.~G.~Bodmann and J.~Haas, \emph{Frame potentials and the geometry of frames}, J. Fourier Anal. Appl. (In press), 2015
%
%
%\end{thebibliography}

\end{document}